\newtheorem{lemma}{Lemma}[section]
\newtheorem{theorem}[lemma]{Theorem}
\newtheorem{corollary}[lemma]{Corollary}
\newtheorem{proposition}[lemma]{Proposition}
\newcommand{\F}{\mathcal F}
\newcommand{\Hom}{\operatorname{Hom}}
\newcommand{\Ext}{\operatorname{Ext}}
\newcommand{\thick}{\operatorname{thick}}
\newcommand{\End}{\operatorname{End}}
\renewcommand{\ker}{\operatorname{ker}}
\newcommand{\coker}{\operatorname{coker}}
\newcommand{\add}{\operatorname{add}}
\newcommand{\module}{\operatorname{mod}}
\newcommand{\U}{\mathcal U}
\newcommand{\C}{\mathcal C}
\newcommand{\D}{\mathcal D}
\newcommand{\E}{\mathcal E}
\newcommand{\Q}{\mathcal Q}
\renewcommand{\S}{\mathcal S}
\renewcommand{\mod}{\operatorname{mod}}
\title{Three kinds of mutation}
\author[Buan]{Aslak Bakke Buan}
\address{Institutt for matematiske fag\\
Norges teknisk-naturvitenskapelige universitet\\
N-7491 Trondheim\\
Norway}
\email{aslakb@math.ntnu.no}
\author[Reiten]{Idun Reiten}
 \address{Institutt for matematiske fag\\
Norges teknisk-naturvitenskapelige universitet\\
N-7491 Trondheim\\
Norway}
\email{idunr@math.ntnu.no}
\author[Thomas]{Hugh Thomas}
\address{Department of Mathematics and Statistics\\University of 
New Brunswick\\Fredericton NB\\E3B 1J4 Canada
}
\email{hthomas@unb.ca}
\date{\today}
\begin{document}

\begin{abstract}For a finite dimensional hereditary algebra $H$, we consider: exceptional
sequences in $\mod H$, silting objects in the bounded derived category $\D$, and $m$-cluster tilting objects
in the $m$-cluster category, $\C_m=\D/\tau^{-1}[m]$.
There are mutation operations on both the set of $m$-cluster tilting objects and 
the set of exceptional sequences.
It is also possible
to define a mutation operation for silting objects.  
We compare these three different notions of mutation. 
\end{abstract}

\maketitle

\section*{Introduction}

Let $H$ be a hereditary finite dimensional algebra with $n$ isomorphism
classes of simple objects.  The tilting objects
for $H$ inside $\D=D^b(H)$, the bounded derived category of finitely generated
$H$ modules, are of considerable importance, and several
variations on the concept have also been studied.  In
this paper, we will be concerned with three of these: exceptional
sequences in $\mod H$, silting objects in $\D$, and $m$-cluster tilting objects
in the $m$-cluster category, $\C_m=\D/\tau^{-1}[m]$, which is an orbit 
category of $\D$.

There are mutation operations on both the set of $m$-cluster tilting objects and 
the set of exceptional sequences.
It is also possible
to define a mutation operation for silting objects.  
Our main goal in this paper
is to relate these different notions of mutation.  
In order to state our results, we must introduce some definitions.  

A basic object $T$ in $\D$ is said to be partial silting if
$\Ext^{i}(T,T)= 0$ for $i>0$, and silting if in addition $T$ is maximal with this property.  
Partial silting objects were also studied in \cite{kv} 
(they were then called silting objects), and in \cite{ast}, where some 
connections to exceptional sequences were also investigated.

A basic object $T$ in $\C_m$
is called $m$-cluster tilting if $\Ext_{\C_m}^{i}(T,T)= 0$ for $0 < i \leq m$ and 
if $X$ satisfies 
$\Ext_{\C_m}^{i}(T,X)= 0$ for $0 <i \leq m$, then $X$ is in $\add T$.
This condition is known to be equivalent to
$T$ being maximal with respect to $\Ext_{\C_m}^{i}(T,T)= 0$ for $0 < i \leq m$ 
\cite{w,zz}, and to $T$ having $n$ indecomposable summands.  If $T$ satisfies 
$\Ext_{\C_m}^{i}(T,T)=0$ for $0<i\leq m$ and it has $n-1$ summands, it is called
{\em almost $m$-cluster tilting}.  

If we make a (reasonable, but not canonical) choice of a fundamental domain
in $\D$ for $\tau^{-1}[m]$, then the lifting of objects from 
$\C_m$ to $\D$ takes $m$-cluster tilting objects to silting objects.  
Conversely, any silting object is the lift of an $m$-cluster tilting object
for $m$ sufficiently large.  This allows us to deduce the number of 
summands of a silting object, and to lift the notion of mutation from
$m$-cluster tilting objects to silting objects.  

Exceptional sequences were 
investigated first 
in the context of algebraic geometry (see \cite{rudakov}), and
later for hereditary finite dimensional algebras in \cite{cb,r}.
An exceptional sequence in $\mod H$ is a sequence of indecomposable
$H$ modules, $(E_1,\dots,E_r)$, with the properties that $\Hom(E_i,E_j)=0$ 
for $j<i$ and $\Ext^1(E_i,E_j)=0$ for $j\leq i$.  The maximal length of
an exceptional sequence is $n$; an exceptional 
sequence of maximal length is called 
{\em complete}, and one of length $n-1$ is called {\em almost complete}.

Given a sequence of indecomposable $H$ modules $(M_1,\dots,M_n)$, a {\em
placement} of $(M_1,\dots,M_n)$ is a sequence $(\widehat M_1,\dots,\widehat
M_n)$ of objects in $\D$ such that $\widehat M_i$ is isomorphic to 
$M_i[t_i]$ for some $t_i\in \mathbb Z$.  
We will be interested in studying placements of an exceptional 
sequence which
form a silting object. 
It is easy to see that any 
complete 
exceptional sequence admits such a placement.  
It is natural to ask if it is possible to define a placement
for each exceptional sequence in $\mod H$, such that if two
exceptional sequences are related by a single mutation, the
same would be true for  the corresponding silting objects.  
Simple examples show that this is too much to ask.  In fact,
for some $H$, 
there exists an exceptional sequence $\E$ such 
that it is not possible to place $\E$ and the sequences $\E_i$ obtained by
applying a single mutation to $\E$, in such a way that the 
corresponding silting object $\widehat \E$ is related
by a single mutation to each of the $\widehat \E_i$.  

Our main result is as follows.   
Let $\F$ be an almost complete 
exceptional sequence, and $\E_1,\dots,\E_{n+1}$ be the complete
exceptional sequences which can be obtained by adding one term to
$\F$.  Essentially by definition, $\E_i$ and $\E_{i+1}$ are related by a 
single mutation.  Then there exists a placement $\widehat \E_i$ of each $\E_i$, all of which
agree as to the placement of $\F$, such that $\widehat \E_i$ is silting
for all $i$, and such that $\widehat \E_i$ and $\widehat \E_{i+1}$ are 
related by a single mutation.  

In the course of our investigations, we define a quiver associated to an
exceptional sequence, as follows.  The vertex set consists of the 
terms in the exceptional sequence; there is an arrow from $E_i$ to 
$E_j$ if $\Hom(E_i,E_j)\ne 0$ or $\Ext^1(E_j,E_i)\ne 0$.  (Note the reversal
of the order of $E_i$ and $E_j$ in the two conditions.)  We show that 
this quiver is always acyclic, and that it is connected provided $H$ is connected and the exceptional sequence is complete.  This acyclicity is crucial for our main result mentioned above.

In section 1,
we discuss exceptional sequences, starting with definitions and basic properties and we define and study
the quiver associated with an exceptional sequence.
In section 2 we give connections between exceptional sequences, silting objects and $m$-cluster tilting objects and
in section 3 we consider connections between the different kinds of
mutation. 
Finally, in section 4 we show that for a fixed almost complete
  exceptional sequence $\E$, and its set of complements, there is a
  natural interpretation of $\E$ as an almost complete silting object in
  such a way that complements and exchange sequences correspond.

\section{Properties of exceptional sequences}\label{sec1}

In this section we first recall the definition and some basic properties of exceptional
sequences. Then we define, for each exceptional sequence $\E$, 
a naturally associated quiver $\Q_{\E}$, and show that such quivers are always 
acyclic, and are connected provided $H$ is connected and $\E$ is complete. 
In section \ref{sec4}, these quivers and their acyclicity will be a main ingredient
in the proof of Theorem \ref{main}.

As before, let $H$ be a finite dimensional hereditary algebra over a field $k$.
We do not assume that $k$ is algebraically closed.  
We always assume that $H$ has $n$ simple modules up to isomorphism.
An indecomposable module $E$ is called {\em exceptional} if $\Ext^1(E,E) = 0$, and
a sequence $(E_1, \dots , E_r)$ of exceptional $H$-modules is said to be an {\em exceptional sequence}
if $\Hom(E_j,E_i) = 0 = \Ext^1(E_j,E_i)$ for $j > i$.

\subsection{Basic properties}\label{basics}

We have the following useful fact.

\begin{lemma}\label{atmostone}
Given an exceptional sequence $(E_1, \dots , E_r)$,
then $\Hom(E_i,E_j)$ and $\Ext^1(E_i,E_j)$ cannot both be non-zero for $i<j$.
\end{lemma}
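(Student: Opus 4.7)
The plan is to reduce to a length-two sequence and then to rule out the coexistence of a nonzero homomorphism and a nonzero extension in that case. For the reduction, observe that for any $i<j$ the pair $(E_i,E_j)$ is itself an exceptional sequence of length two, since the required vanishings $\Hom(E_j,E_i)=\Ext^1(E_j,E_i)=0$ are inherited from the original sequence. So it suffices to prove the statement for $r=2$; rename the pair $(E,F)$.

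Assume for contradiction that both $\Hom(E,F)$ and $\Ext^1(E,F)$ are nonzero. Pick a nonzero class $\xi\in\Ext^1(E,F)$, represented by a short exact sequence $\eta\colon 0\to F\to M\to E\to 0$. Since $E$ and $F$ are exceptional and $H$ is hereditary, $D_E:=\End(E)$ and $D_F:=\End(F)$ are division rings. Applying $\Hom(E,-)$ and $\Hom(-,F)$ to $\eta$ yields connecting maps $D_E\to\Ext^1(E,F)$ and $D_F\to\Ext^1(E,F)$ sending $\operatorname{id}_E$ and $\operatorname{id}_F$ respectively to $\xi\neq 0$; since the source in each case is a division ring and the maps are one-sided linear, both are injective. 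Chasing the resulting long exact sequences further gives $\Ext^1(M,E)=0=\Ext^1(F,M)$, $\Hom(M,E)=D_E$, $\Hom(F,M)=D_F$, and $\Hom(E,M)\isom\Hom(E,F)\isom\Hom(M,F)$.

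To obtain the contradiction I would pass to the thick subcategory $\thick(E,F)\subset\D$, which, for an exceptional pair, is equivalent to the bounded derived category of some hereditary algebra $H'$ with only two simples. In this two-simple setting the exceptional pairs can be listed explicitly and the incompatibility of the two non-vanishings reduces to a direct inspection. The main obstacle is precisely this last step: the thick-subcategory equivalence is a nontrivial input, and one must take care that invoking it is not circular. A more self-contained alternative would be to compute $\End(M)$ and $\Ext^1(M,M)$ directly from the long exact sequences above, decompose $M$ into indecomposable rigid summands, and show that some summand is forced to carry a non-scalar endomorphism, contradicting exceptionality.
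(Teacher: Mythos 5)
Your proposal follows essentially the same route as the paper: reduce to the exceptional pair $(E_i,E_j)$, realize it inside the module category of a rank-two hereditary algebra via the results of Crawley-Boevey and Ringel (which do not depend on this lemma, so the circularity you worry about does not arise), and finish by inspecting the possible exceptional pairs there, using that every exceptional indecomposable over a rank-two hereditary algebra is preprojective or preinjective. The long-exact-sequence computations in your middle paragraph are correct but unused, and the final inspection you leave implicit is exactly the case analysis the paper also leaves to the reader.
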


\begin{proof}
Fix $E=E_i$, $F=E_j$.  
By results from \cite{cb,r}, we can consider $({E}, {F})$ as an exceptional sequence
in a hereditary module category, say $\mod H'$, with $H'$ of rank 2 and such that $\mod H'$ has a full 
and exact embedding into $\mod H$.
For a hereditary algebra $H'$ of rank 2, the only exceptional indecomposable modules are
preprojective or preinjective. Hence, a case analysis of the possible exceptional sequences in
$\mod H'$ for such algebras, gives the result.
\end{proof}

There are right and left mutation operations on exceptional sequences. Here we use {\em right mutation} $\mu_i$,
which for an exceptional sequence $\E = (E_1,E_2, \dots ,E_r)$ and an index $i \in \{1, \dots, r-1 \}$ is 
defined by replacing the pair $(E_i,E_{i+1})$ with a pair
$(E_{i+1}, E_i^{\ast})$. Here $E_i^{\ast}$ is defined as follows. 
If $\Hom(E_i,E_{i+1}) \neq 0$, consider the minimal left $\add E_{i+1}$-approximation $f \colon E_i \to E_{i+1}^t$.
Then $f$ is either an epimorphism or a monomorphism (by \cite{hr,rs}). If $f$ is an epimorphism, let 
$E_i^{\ast} = \ker f$. If $f$ is a monomorphism, let $E_i^{\ast} = \coker f$.
If $\Ext^1(E_i,E_{i+1}) \neq 0$, then $ E_i^{\ast}$ is defined by the universal extension \cite{bong}
$$0 \to E_{i+1}^s \to E_i^{\ast} \to E_i \to 0.$$ 
Finally, if $\Hom(E_i,E_{i+1}) = 0 = \Ext^1(E_i,E_{i+1})$,
then $E_i^{\ast} = E_i$. 

One obtains a uniform description of $E_i^{\ast}$ as follows. Let $\thick E_{i+1}$ be the
thick subcategory of $\D$ generated by $E_{i+1}$. The indecomposable objects in this subcategory are of the form $E_{i+1}[j]$, for some
integer $j$. Let $E_i \to Z$ be a minimal left $(\thick E_{i+1})$-approximation in $\D$, and complete to a triangle
$$E_i \to Z \to E^{\sharp}.$$ 
It is easy to see that $E_i^{\sharp} = E_i^{\ast}[j]$ with $j$ either $0$ or $1$.

We recall the following from \cite{cb,r}.

\begin{proposition}\label{known}
Let $\E = (E_1,E_2, \dots ,E_r)$ be an exceptional sequence
in $\module H$. 
\begin{itemize}
\item[(a)] We have $r \leq n$.
\item[(b)] If $r<n$, then there is an complete exceptional sequence 
$$(E_1,E_2, \dots ,E_r, E_{r+1}, \dots E_n).$$
\item[(c)] If $r=n-1$, then there is for each $j \in \{1, \dots, n\}$
a unique indecomposable module $M$,  such that 
$$(E_1, \dots ,E_{j-1}, M, E_j, \dots, E_{n-1})$$ is an exceptional sequence.
\item[(d)] There is an ordering $S_1, \dots, S_n$ of the simple $H$-modules,
such that $\S = (S_1, \dots, S_n)$ is a complete exceptional sequence. Moreover,
any complete exceptional sequence can be reached from $\S$ by a sequence of mutations and inverses of mutations.
\end{itemize}
\end{proposition}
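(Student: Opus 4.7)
Since Proposition \ref{known} collects classical results of Crawley-Boevey and Ringel, the plan is to sketch the standard perpendicular-category approach that underlies all four parts. For an exceptional module $E \in \mod H$, Schofield's construction gives that the right perpendicular subcategory $E^{\perp} = \{X : \Hom(E,X) = \Ext^1(E,X) = 0\}$ and the left perpendicular subcategory ${}^{\perp}E$ are each exact abelian subcategories of $\mod H$, equivalent to $\mod H'$ for a hereditary finite dimensional algebra $H'$ with $n-1$ simples. Moreover, exceptional sequences in these subcategories coincide with those exceptional sequences of $\mod H$ whose terms all happen to lie in the subcategory.

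Granting this, parts (a), (b), (c) fall out by induction on $n$. For (a), observe that the vanishing conditions defining $\E$ place $(E_1,\dots,E_{r-1})$ inside $E_r^{\perp}$ as an exceptional sequence there; the inductive bound in rank $n-1$ gives $r-1 \leq n-1$. For (b), the plan is to extend $(E_1,\dots,E_{r-1})$ inside $E_r^{\perp}$ to a complete exceptional sequence by induction, and then append $E_r$ at the end; the fact that each new term lies in $E_r^{\perp}$ guarantees the resulting length-$n$ sequence is exceptional in $\mod H$. For (c), the defining conditions force the insertion $M$ to lie in the iterated perpendicular subcategory
$${}^{\perp}E_1 \cap \dots \cap {}^{\perp}E_{j-1} \cap E_j^{\perp} \cap \dots \cap E_{n-1}^{\perp},$$
which after $n-1$ rank reductions is the module category of a hereditary algebra of rank $1$, namely a skew field; such a category has exactly one indecomposable object, yielding both existence and uniqueness of $M$.

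Part (d) splits into an easy existence statement and a harder transitivity statement. Existence of the simple complete exceptional sequence $\S$ follows from acyclicity of the Gabriel quiver of the basic algebra $H$: order the vertices so every arrow points from a higher to a lower index; then $\Hom(S_j,S_i)=0$ for $j \neq i$ by simplicity, and $\Ext^1(S_j,S_i)$, which counts arrows from $i$ to $j$, vanishes for $j>i$. The hard part will be transitivity of the braid group action. The strategy, again inductive on $n$, is to show that any complete exceptional sequence $(F_1,\dots,F_n)$ can be mutated to one whose last entry equals a prescribed simple $S_n$; one then restricts to $S_n^{\perp}$ and applies the inductive hypothesis to relate the truncated sequences. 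Producing the mutations that bring the target simple into last position is the principal obstacle, and is typically handled by a dimension-decreasing \emph{Euclidean-algorithm} style argument applied to rank-$2$ subsequences, which is where the full force of the original Crawley-Boevey/Ringel analysis must be invoked.
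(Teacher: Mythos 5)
The paper does not prove Proposition \ref{known} at all; it recalls it from \cite{cb,r}, so there is no internal argument to compare against, and your sketch should be judged on its own. It follows the standard perpendicular-category route of those references, and parts (a), (c), and the existence half of (d) are essentially correct as you describe them (for (c) one should also note that the candidate $M$ is forced to be the unique indecomposable of the rank-one category, which is automatically exceptional, giving both existence and uniqueness).

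There are two points to fix. First, your argument for (b) does not prove the statement as formulated: completing $(E_1,\dots,E_{r-1})$ inside $E_r^{\perp}$ and then appending $E_r$ produces a complete exceptional sequence of the shape $(E_1,\dots,E_{r-1},F_1,\dots,F_{n-r},E_r)$, with $E_r$ pushed to the last position, whereas (b) asserts that the new terms can all be placed \emph{after} $E_r$, so that the given sequence is an initial segment. The new modules $F_k$ only satisfy $\Hom(E_r,F_k)=\Ext^1(E_r,F_k)=0$, not the reverse vanishing, so you cannot simply reorder. To get the stated form, work instead in the left perpendicular category ${}^{\perp}(E_1\oplus\cdots\oplus E_r)$, which is the module category of a hereditary algebra of rank $n-r$; any complete exceptional sequence there can be appended on the right of $(E_1,\dots,E_r)$, and the required vanishing against $E_1,\dots,E_r$ holds by the definition of the left perpendicular (existence of such a sequence comes from the existence part of (d) applied to that smaller algebra, or from the induction). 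Second, for the transitivity statement in (d) you only describe the inductive skeleton and explicitly defer the key step---mutating an arbitrary complete exceptional sequence so that a prescribed module lands in the last position---to the original sources. Since the paper itself treats the entire proposition as a citation this is defensible, but be aware that your proposal is not self-contained on this point: the dimension-decreasing argument you allude to is precisely where the substance of the Crawley-Boevey/Ringel proof lies.
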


\subsection{The Hom-Ext-quiver}

We now associate a quiver $\Q_{\E}$ to an exceptional sequence $\E= (E_1,E_2, \dots ,E_r)$.
The vertices of $\Q_{\E}$ are in bijection with $\{ E_1,E_2, \dots ,E_r \}$.
There is an arrow $E_i \to E_j$ if $\Hom(E_i, E_j) \neq 0$ or if $\Ext^1(E_j, E_i) \neq 0$.
We call this the {\em Hom-Ext quiver} of $\E$.

The construction of this quiver was motivated by the situation when 
$H$ is representation finite.  In that case, the AR quiver defines a 
partial order on the indecomposable $H$-modules, which can 
be restricted to give a partial order on the terms of $\E$.  
If $E_i\rightarrow E_j$
in the Hom-Ext quiver, then $E_i$ precedes $E_j$ with respect to this
partial order (though not conversely).  Where we have used the 
acylicity of $\Q_\E$ below, we could instead have used the order induced
from the AR quiver.      
When $H$ is not representation finite, however, the AR quiver
will have more that one components and will typically contain cycles, making it
difficult to use it to define a partial order on the terms of $\E$; 
the Hom-Ext quiver is a replacement for this order.  

Note that if $A$ and $B$ are distinct exceptional modules with $\Hom(A,B) \neq 0$, then
there is, by \cite{hr}, either a monomorphism $A \to B$ or an epimorphism $A \to B$ and (by reason of total dimension) not both.
Let us add some decoration to our quiver.
There are three types of arrows. There is an m-arrow  $E_i \to E_j$ if 
there is a monomorphism $E_i \to E_j$.
There is an e-arrow  $E_i \to E_j$ if  there is an epimorphism $E_i \to E_j$, and 
there is an x-arrow  $E_i \to E_j$ if $\Ext^1(E_j, E_i) \neq 0$.

We aim to prove that the quiver $\Q_{\E}$ is acyclic and that it is connected
provided $\E$ is 
complete and $H$ is connected.
For this purpose the following result
is useful.

\begin{lemma}\label{elementary}
Assume $A,B,C$ are exceptional modules contained in an exceptional sequence $\E$.
Then we have the following.
\begin{itemize}
\item[(a)] If there is an e-arrow $A \to B$, then there is not an m-arrow  $B \to C$.
\item[(b)] If there are m-arrows $A \to B$ and $B \to C$, there must be an m-arrow
$A \to C$.
\item[(c)] If there are e-arrows $A \to B$ and $B \to C$, there must be an e-arrow
$A \to C$.
\item[(d)]  If there is an e-arrow $A \to B$ and an x-arrow $B \to C$,
then there is an  x-arrow $A \to C$.
\item[(e)] If there is an x-arrow $A \to B$ and an m-arrow $B \to C$,
then there is an x-arrow $A \to C$.
\item[(f)] If there are arrows $A \to B \to C$ and $B \to C$ is an m-arrow,
then there is an arrow $A \to C$. 
\end{itemize}
 \end{lemma}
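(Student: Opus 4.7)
My plan is to handle the six parts with three different tools, in increasing order of difficulty: direct composition of morphisms for (b) and (c), a long exact $\Ext$ sequence cut off by heredity for (d) and (e), and Happel--Ringel together with a composition argument for (a), with (f) following by case analysis.

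Parts (b) and (c) I would dispatch immediately: a composition of two monomorphisms is a monomorphism and a composition of two epimorphisms is an epimorphism, so composing the two given morphisms produces a monomorphism or epimorphism $A \to C$, which in particular makes $\Hom(A,C)\neq 0$ and yields the appropriate m- or e-arrow. For (d), I would apply $\Hom(C,-)$ to the short exact sequence $0 \to K \to A \to B \to 0$ arising from the e-arrow $A \to B$; since $H$ is hereditary, the long exact sequence truncates to give a surjection $\Ext^1(C,A) \to \Ext^1(C,B) \to 0$, and the target is nonzero by hypothesis. Part (e) is analogous: applying $\Hom(-,A)$ to $0 \to B \to C \to C/B \to 0$ produces a surjection $\Ext^1(C,A) \to \Ext^1(B,A) \to 0$ with nonzero target. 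In each case the conclusion $\Ext^1(C,A) \neq 0$ gives the claimed x-arrow.

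The hard part will be (a). I would argue by contradiction: suppose we have both an epimorphism $f \colon A \to B$ and a monomorphism $g \colon B \to C$. The composition $gf$ is nonzero because $g$ is injective and $f$ is surjective onto the nonzero object $B$, while $\ker(gf) = \ker f \neq 0$ and the image of $gf$ equals $g(B) \subsetneq C$, so $gf$ is neither a monomorphism nor an epimorphism. At the same time, the existence of the nonzero maps $A \to B$ and $B \to C$ forces $A$ to precede $B$ and $B$ to precede $C$ in $\E$, and in particular $\Ext^1(C,A)=0$. The Happel--Ringel theorem \cite{hr}, applied to the indecomposables $A$ and $C$ with $\Ext^1(C,A)=0$, then asserts that every nonzero morphism $A \to C$ is a monomorphism or an epimorphism, contradicting what I have just said about $gf$. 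With (a)--(e) in hand, (f) is a short case analysis on the type of the arrow $A \to B$: the m-case reduces to (b), the x-case to (e), and the e-case is excluded outright by (a).
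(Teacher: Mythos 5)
Your proof is correct and takes essentially the same route as the paper: parts (b), (c) by composing monomorphisms/epimorphisms, parts (d), (e) via the truncated long exact $\Ext$-sequence over a hereditary algebra, part (a) by noting that the composite of an epimorphism and a monomorphism is nonzero but neither mono nor epi (contradicting Happel--Ringel, which the paper invokes implicitly in the remark preceding the lemma), and part (f) by the same case analysis. No gaps.
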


\begin{proof}
(a) follows from the fact that the composition of an epimorphism and a monomorphism is a nonzero map which
is neither a monomorphism nor an epimorphism.

(b) and (c) are obvious.

For (d) apply $\Hom(C,\ )$ to the exact sequence 
$$0\to K \to A \to B \to 0$$
to obtain an epimorphism $\Ext^1(C,A) \to \Ext^1(C,B)$.

(e) is dual to (d).

For (f), we have that $A \to B$ cannot be an e-arrow by (a). Then the claim follows from
combining (b) and (e).
\end{proof}

Applying this we first show acyclicity.

\begin{theorem}\label{acyclic}
The Hom-Ext quiver $\Q_{\E}$ of a complete exceptional sequence is acyclic.
\end{theorem}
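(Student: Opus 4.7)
The plan is to argue by contradiction. Suppose $\Q_\E$ contains a directed cycle, and pick one of minimum length $L \geq 2$, say $v_1 \to v_2 \to \cdots \to v_L \to v_1$. I would first record a sign convention that is implicit in the construction of $\Q_\E$: each arrow is of exactly one of the three types m, e, x, and by the defining vanishing conditions of an exceptional sequence, m- and e-arrows go from a smaller index of $\E$ to a larger one (since $\Hom(E_j,E_i)=0$ for $j>i$), while x-arrows go from a larger index to a smaller one (since $\Ext^1(E_j,E_i)=0$ for $j>i$).

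The case $L=2$ is immediate from Lemma \ref{atmostone}. Writing $v_1 = E_a$ and $v_2 = E_b$ with $a < b$ without loss of generality, the forward arrow cannot be of x-type, so $\Hom(E_a,E_b) \neq 0$, and the return arrow must be of x-type, so $\Ext^1(E_a,E_b) \neq 0$. Both being nonzero contradicts Lemma \ref{atmostone}.

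For $L \geq 3$ I would use Lemma \ref{elementary} to rule out most patterns of consecutive arrows along the cycle. By minimality, no consecutive pair $v_i \to v_{i+1} \to v_{i+2}$ can produce a shortcut arrow $v_i \to v_{i+2}$ via parts (b), (c), (d), (e) or (f), since such a shortcut would yield a shorter directed cycle. Combined with (a), which excludes e-m outright, the only admissible consecutive pairs in the minimal cycle are m-e, m-x, x-e, and x-x. Chasing successors, every e-arrow would need to be followed by m (impossible by (a)), e (reducible by (c)), or x (reducible by (d)), so no e-arrow appears in the cycle. Chasing predecessors, every m-arrow would need to be preceded by m (reducible by (b)), e (now absent), or x (reducible by (e)), so no m-arrow appears either. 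The minimal cycle therefore consists entirely of x-arrows, but by the sign convention of the first paragraph the index of $\E$ strictly decreases along every arrow, contradicting the fact that the cycle is closed.

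The only delicate aspect is keeping the case analysis of admissible consecutive pairs complete, and noticing that the surviving types m-e, m-x, x-e, x-x are each index-monotone in a sense that prevents them from being strung together into a closed cycle; organising the purge as ``first eliminate e-arrows, then eliminate m-arrows'' keeps the bookkeeping painless, and the final contradiction from the all-x cycle falls out immediately from the monotonicity.
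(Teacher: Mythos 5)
Your proof is correct and follows essentially the same strategy as the paper: take a minimal directed cycle, rule out length two via Lemma \ref{atmostone}, use the composition rules of Lemma \ref{elementary} to forbid shortcut-producing consecutive pairs, and derive the final contradiction from the index-monotonicity forced by the exceptional-sequence vanishing conditions. The only (cosmetic) difference is the order of elimination --- the paper first kills m-arrows via part (f) and then finds an e-arrow followed by an x-arrow, whereas you purge e-arrows first and then m-arrows, ending with an all-x cycle; both endgames rest on the same monotonicity observation.
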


\begin{proof}
Assume there is a cycle in the quiver $\Q_{\E}$. We can assume that the cycle has minimal
length. Hence, for any two vertices $X$ and $Y$ on this cycle which are not consecutive
on the cycle, there are no arrows between $X$ and $Y$. Note also that by Lemma \ref{atmostone} the length
must be at least three. By Lemma \ref{elementary} (f) there cannot be any m-arrows on the minimal cycle.

It is clear that there must be at least one x-arrow on the cycle, and at least one arrow
which is not an x-arrow, since otherwise the objects corresponding to 
vertices on the cycles could not be ordered into an exceptional sequence.
Thus there must be an x-arrow which is preceded by an e-arrow. However,
by Lemma \ref{elementary} (e), this is not possible on a minimal cycle.
\end{proof}

We also have the following property. 

\begin{theorem} If $H$ is connected, 
then the Hom-Ext quiver $\Q_{\E}$ of a complete exceptional sequence $\E$ in $\module H$ is connected.
\end{theorem}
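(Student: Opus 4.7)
The plan is to argue by contradiction. Suppose $\Q_\E$ splits as a disjoint union of nonempty subquivers with vertex sets $A$ and $B$. By the definition of $\Q_\E$, this means
$\Hom(E,F)=\Hom(F,E)=0$ and $\Ext^1(E,F)=\Ext^1(F,E)=0$
for every $E\in A$ and $F\in B$. Since $H$ is hereditary, $\Hom_\D(X,Y[k])$ vanishes for $X,Y\in\mod H$ and $k\notin\{0,1\}$, so the hypothesis in fact gives complete orthogonality of $A$ and $B$ in $\D$ at every shift: $\Hom_\D(E,F[k])=0=\Hom_\D(F,E[k])$ for all $k\in\mathbb Z$.

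I next propagate this to the thick subcategories $\U=\thick A$ and $\mathcal V=\thick B$ of $\D$ by the usual dévissage: for fixed $Y$ and $k$, the full subcategory of $X$ with $\Hom_\D(X,Y[k])=0$ is triangulated, and symmetrically in the second argument. This yields $\Hom_\D(U,V[k])=0=\Hom_\D(V,U[k])$ for all $U\in\U$, $V\in\mathcal V$, $k\in\mathbb Z$.

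The heart of the argument is to conclude that $\D=\U\oplus\mathcal V$ as subcategories. Since $\E$ is complete, $\thick\E=\D$, so $\D=\thick(\U\cup\mathcal V)$. The class of objects expressible as $U\oplus V$ with $U\in\U$ and $V\in\mathcal V$ contains $\U\cup\mathcal V$, is stable under shifts and direct summands, and is stable under cones: any morphism $U_1\oplus V_1\to U_2\oplus V_2$ has zero off-diagonal components by orthogonality, hence splits as a direct sum of maps, whose cone is the direct sum of cones. Therefore this class exhausts $\thick(\U\cup\mathcal V)=\D$.

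Finally, each simple $S\in\mod H$ decomposes as $S=U\oplus V$ with $U\in\U$ and $V\in\mathcal V$, and hence, being indecomposable, lies entirely in one of $\U$ or $\mathcal V$. This partitions the simples into $\Sigma_A\sqcup\Sigma_B$, both nonempty (if, say, $\Sigma_A$ were empty, then all simples would lie in $\mathcal V$, forcing $\mathcal V=\D$, and orthogonality would then force $\U=0$, contradicting $A\neq\emptyset$). For $S\in\Sigma_A$ and $S'\in\Sigma_B$ orthogonality gives $\Ext^1(S,S')=0=\Ext^1(S',S)$, so the Gabriel quiver of $H$ has no arrows between $\Sigma_A$ and $\Sigma_B$, contradicting the connectedness of $H$. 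The main obstacle is the orthogonal-decomposition step, and its correctness depends on orthogonality at \emph{every} shift, not merely at shifts $0$ and $1$; heredity of $H$ is what supplies this.
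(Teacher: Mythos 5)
Your proof is correct, but it takes a genuinely different route from the paper's. The paper argues that the splitting of $\E$ into two mutually Hom--Ext-orthogonal parts is preserved under mutation (checking the four cases of the mutation exact sequences), and then invokes the transitivity of the braid group action on complete exceptional sequences (Proposition 1.2(d), from Crawley-Boevey and Ringel) to transport the disconnection to the exceptional sequence of simples, where it immediately contradicts the connectedness of $H$. You instead stay with the given sequence: you upgrade the orthogonality between the two parts to orthogonality at all shifts (using heredity), pass to the thick subcategories they generate by d\'evissage, and use the fact that a complete exceptional sequence generates $\D$ to obtain an orthogonal decomposition $\D=\U\oplus\mathcal V$, which then forces a partition of the simples and the same contradiction. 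Your argument avoids the case analysis on mutations, but it leans on the generation statement $\thick\E=\D$, which the paper never states; that fact is standard (it follows from the perpendicular-category machinery of [CB], [R], and indeed one common proof of it uses the very transitivity theorem the paper cites), so the two proofs ultimately draw on inputs of comparable depth. You should state the generation fact explicitly with a reference, and note that the ``closed under summands'' step in your thickness argument uses that $\D$ is Krull--Schmidt; with those points made explicit the proof is complete.
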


\begin{proof}
Assume that $Q_{\E}$ is not connected, and assume that $\E = \E_1 \cup \E_2$, with   
$\E_1 = \{A_1, \dots, A_r \}$ and $\E_2  = \{B_1, \dots, B_s \}$ being non-empty disjoint subsets of the elements in $\E$, 
with the property that there are no arrows between any vertex corresponding to an object in $\E_1$ 
and any vertex corresponding to an object in $\E_2$. 

This is equivalent to having $\Hom(A_i,B_j) = \Hom(B_j, A_i) = \Ext^1(A_i,B_j) = \Ext^1(B_j,A_i)= 0$
for all $i,j$.
We claim that any exceptional sequence $\E'$ obtained by mutating $\E$,
also has the property that $Q_{\E'}$ is disconnected. This will give a contradiction, because of transitivity
of the action of mutations on exceptional sequences, and Proposition \ref{known} (d).
To prove the claim consider the mutation pair $(E_i,E_{i+1})$, and consider the four possibilities:

\begin{itemize}
\item[(1)] $\Hom(E_i, E_{i+1}) = \Ext^1(E_i, E_{i+1}) =0$ and $E_i^{\ast} = E_i$
\item[(2)] There is an exact sequence $0 \to E_i \to E_{i+1}^t \to E_i^{\ast} \to 0$
\item[(3)] There is an exact sequence $0 \to E_i^{\ast} \to E_i \to E_{i+1}^t \to 0$
\item[(4)] There is an exact sequence $0 \to E_{i+1}^t \to E_i^{\ast} \to E_i \to 0$
\end{itemize}

Recall that in cases (2) and (3) the map $E_i \to E_{i+1}^t$ is a minimal left $\add E_{i+1}$-approximation, and
in case (4) the exact sequence is a universal extension.

We claim that if we replace
$E_i$ with $E_i^{\ast}$, then $Q_{\E'}$ will disconnect in the same way as $Q_{\E}$.
If we are in situation (1) above, this is trivial. Assume we are in situation (2), (3) or (4).
Then either both $E_i$ and $E_{i+1}$ are in $\E_1$ or both are in $\E_2$. Without loss of generality we 
assume they are both in $\E_1$. Let $B$ an object in $\E_2$. Considering the
long exact sequences obtained from applying $\Hom(B,\ )$ and $\Hom(\ ,B)$ to the relevant exact sequence (2),
(3) or (4), we obtain that
$\Hom( E_i^{\ast},B) = \Hom(B,  E_i^{\ast}) = \Ext^1( E_i^{\ast},B) = \Ext^1(B, E_i^{\ast})= 0$.
This gives the desired result.

\end{proof}

\section{Connection between exceptional sequences, silting objects and $m$-cluster tilting objects}

In this section we point out how to construct silting objects from exceptional sequences and vice versa. 
We also investigate the connection between 
silting objects and $m$-cluster tilting objects.

\subsection{Silting objects and exceptional sequences}

The results in this section partially overlap with \cite{ast}.
As before, let $H$ be a finite dimensional hereditary algebra with $n$ isomorphism classes of simples,
and let $T$ be an object in $\D$.
Recall that $T$ is called a {\em partial silting object} if $\Ext^i_{\D}(T,T) = 0$ for all $i>0$. 
Without loss of generality we can assume that $T$ is in the non-negative part $\D^+$of $\D$, i.e. we assume
\begin{equation}\label{eq} T= T_0[0] \oplus T_1[1] \oplus \cdots \oplus T_m[m] \end{equation}
with each $T_i$ in $\module H$. Then we have the following.

\begin{lemma}\label{silting}
$T$ is partial silting if and only if 
\begin{itemize}
\item[(i)]$\Ext^1_H(T_i,T_i) = 0$ for each i
\item[(ii)]$\Hom(T_i,T_j) = 0 = \Ext^1_H(T_i,T_j)$ for $i>j$.
\end{itemize}
\end{lemma}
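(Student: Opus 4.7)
The plan is to reduce $\Ext^k_{\D}(T,T)$ to computations in $\mod H$, and then exploit that $H$ is hereditary so that only $\Hom$ and $\Ext^1$ carry content.

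First I would expand using the direct sum decomposition of $T$, writing
$$\Ext^k_{\D}(T,T) = \bigoplus_{i,j} \Ext^k_{\D}(T_i[i], T_j[j]) = \bigoplus_{i,j} \Ext^{k+j-i}_H(T_i, T_j),$$
where the second identification uses the standard fact that for modules $X,Y$ viewed as stalk complexes in $\D = D^b(H)$ we have $\Hom_{\D}(X, Y[\ell]) = \Ext^{\ell}_H(X, Y)$. So $T$ is partial silting if and only if $\Ext^{k+j-i}_H(T_i, T_j) = 0$ for every $i,j$ and every $k > 0$.

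Next I would use that $H$ is hereditary, which kills $\Ext^\ell_H$ for $\ell \geq 2$. This reduces the condition to looking only at the pairs $(i,j,k)$ for which $k + j - i \in \{0, 1\}$ and $k > 0$. A small case split on the relationship between $i$ and $j$ finishes it: when $i = j$ only $k = 1$ is a non-trivial constraint, giving (i); when $i > j$ the choices $k = i-j$ and $k = i-j+1$ yield $\Hom_H(T_i, T_j) = 0$ and $\Ext^1_H(T_i, T_j) = 0$ respectively, which is (ii); when $i < j$ every constraint corresponds to $\ell \geq 2$ and is automatic. Since each implication is a biconditional, the equivalence in both directions follows.

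This is essentially a bookkeeping argument, so I do not anticipate a serious obstacle; the only thing to be careful about is correctly tracking the shift when identifying $\Ext^k_{\D}(T_i[i], T_j[j])$ with $\Ext^{k+j-i}_H(T_i, T_j)$, and remembering that the hereditary hypothesis is what makes the list of constraints finite and short.
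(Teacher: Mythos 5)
Your proof is correct and follows essentially the same route as the paper: decompose $\Ext^k_{\D}(T,T)$ into the summands $\Hom_{\D}(T_i[i],T_j[j+k])$, invoke the hereditary hypothesis to reduce to $\Hom$ and $\Ext^1$, and split into the cases $i<j$, $i=j$, $i>j$. The shift bookkeeping is tracked correctly, so there is nothing to add.
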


\begin{proof}
We have that $T$ is partial silting if and only if $\Ext^1(T_i,T_i) = 0$ for each $i$ and 
$\Hom(T_i[i],T_j[j][t]) = \Hom(T_i[i],T_j[t+j])=0$ for $t>0$. 
Note that if $i <j$, then $\Hom(T_i[i],T_j[t+j]) = 0$ for $t>0$, since $t+j-i \geq 2$.
We also have that if $i >j$, then $\Hom(T_i[i],T_j[j][t]) \simeq \Hom(T_i,T_j[t-(i-j)])$.
Hence, for $i>j$, we have that 
$\Hom(T_i,T_j[t-(i-j)]) = 0$ for all $t>0$ if and only if $\Hom(T_i,T_j)= 0= \Ext^1(T_i,T_j)$.
This finishes the proof. 
\end{proof}

We now show the following connection between partial silting objects and  
exceptional sequences in $\module H$.

\begin{lemma}\label{silting2}
\begin{itemize}
\item[(a)]
Let $T_0,\dots,T_m$ be partial tilting objects in $\mod H$.  
As in (\ref{eq}), let $T=\bigoplus_{i=0}^m T_i[i]$, 
and let $\widetilde T=\bigoplus_{i=0}^m T_i$.
We have:
\begin{itemize}
\item[(i)] $T$ is a partial silting object if and only if the summands
of $\widetilde{T}$ can be ordered to be an exceptional sequence with all summands of $T_i$ preceding all summands
of $T_j$ for $i<j$.  
\item[(ii)]
$T$ is a silting object if and only if the summands of $\widetilde{T}$ can be ordered as in (i)
above to a complete exceptional sequence.
\end{itemize}
\item[(b)] Any silting object has exactly $n$ indecomposable direct
  summands, up to isomorphism.
\item[(c)]
Let $T' = T'_0 \oplus  T'_1 \oplus \cdots \oplus  T'_r$ be in $\module H$, with the $T_i'$ indecomposable and
mutually non-isomorphic, and let
$$\overline{T'} = T'_0[0] \oplus  T'_1[1] \oplus \cdots \oplus  T'_r[r]$$ be in $\D$. Then we have that  
\begin{itemize}
\item[(i)] $(T'_0, T'_1, \dots, T'_r)$ is an exceptional sequence if and only if
$\overline{T'}$ is a partial silting object.
\item[(ii)] $(T'_0, T'_1, \dots, T'_r)$ is a complete exceptional sequence if and only if
$\overline{T'}$ is a silting object. 
\end{itemize}
\end{itemize}
\end{lemma}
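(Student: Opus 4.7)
The plan is to reduce everything to Lemma \ref{silting} together with the standard fact that the indecomposable summands of a partial tilting module can be linearly ordered so as to form an exceptional sequence. First I would treat part (a)(i). Using Lemma \ref{silting}, the hypothesis that $T$ is partial silting translates to $\Ext^1(T_i,T_i)=0$ for each $i$ together with $\Hom(T_i,T_j)=\Ext^1(T_i,T_j)=0$ for $i>j$. For the forward direction, within each $T_i$ I would order the indecomposable summands as an exceptional sequence and then concatenate these orderings in increasing order of $i$. The resulting sequence is exceptional: within-block exceptionality is by construction, and between blocks, for $i<j$ and any summands $x$ of $T_i$ and $y$ of $T_j$, the required vanishing $\Hom(y,x)=\Ext^1(y,x)=0$ is exactly what Lemma \ref{silting}(ii) provides. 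The converse direction is immediate, since any block-ordered exceptional sequence yields back, by inspection of Hom and Ext between terms at different degrees, the vanishings required by Lemma \ref{silting}.

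For (a)(ii), the key observation is that $T$ is silting if and only if it is a maximal partial silting object, and via (i) this corresponds to maximality of the associated exceptional sequence under extensions that preserve the block-ordering. By Proposition \ref{known}(a) and (b), such a sequence is maximal exactly when it has length $n$, i.e.\ is complete. The one technical point is to realize any extension of the exceptional sequence as a strictly larger partial silting object; placing the new term at a degree strictly greater than $m$ puts it at the end of the concatenation and so preserves the block-ordering. Part (b) is then immediate from (a)(ii) and Proposition \ref{known}(a). Part (c) is simply the special case of (a) in which each $T_i$ is a single indecomposable, so no nontrivial within-block ordering is required and the concatenation reproduces the sequence $(T'_0,\dots,T'_r)$ itself.

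The main obstacle is the within-block ordering used in (a)(i): given a partial tilting module $M$ with indecomposable summands $M_1,\dots,M_k$, one must show these can be arranged as an exceptional sequence. The natural strategy is to verify that the relation $M_i<M_j$ defined by $\Hom(M_i,M_j)\neq 0$ is acyclic on distinct summands, and then take any linear extension. Asymmetry on pairs follows from the Happel--Ringel principle: a nonzero map between non-isomorphic rigid indecomposables with vanishing $\Ext^1$ in both directions must be either a monomorphism or an epimorphism, so parallel nonzero Hom maps in both directions are ruled out by a dimension comparison. Ruling out longer Hom-cycles requires an analogous splitting or dimension argument, after which any topological sort of the resulting acyclic quiver provides the desired exceptional ordering.
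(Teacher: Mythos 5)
Your proposal is correct and follows essentially the same route as the paper: translate the partial silting condition via Lemma \ref{silting}, order the summands within each degree using the directedness of a partial tilting module, concatenate the blocks, and derive (a)(ii), (b), (c) from maximality and Proposition \ref{known}. The only difference is that you sketch the acyclicity of the Hom-quiver on the summands of a partial tilting module (your mono/epi and dimension argument is the standard one and does work), whereas the paper simply cites Happel--Ringel for the fact that $\End_H(T_i)$ has no oriented cycles.
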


\begin{proof}
(a)(i) Assume $T$ is partial tilting. 
Then each $T_i$ is a partial tilting module 
over $H$, and hence the endomorphism algebras $\End_H(T_i)$  
has no oriented cycles \cite{hr}. It now follows from Lemma \ref{silting}
that the indecomposable direct summands of $T_i$ can be ordered as $T_i^{(1)}, \dots, T_i^{(s)}$,
with $\Hom(T_i^{(u)},T_i^{(v)}) =0$ for $u>v$.

\sloppy For the opposite implication, we assume there is an ordering 
such that $\Hom(T_i^{(u)},T_i^{(v)}) =0$ for $u>v$. We then check the conditions of Lemma~\ref{silting}. Condition 
(i) holds by assumption.  Condition (ii) follows from the definition of 
exceptional sequence.  

(ii) Assume that $T$ is a silting object, and that $\widetilde{T}$ can be ordered to an 
exceptional sequence $\E$. Assume $\E$ is not complete. Then
we can extend $\E$ to a new exceptional sequence by adding an object $U$ at the right end.
Then $T \oplus U[r+1]$ is a partial silting object by part (i), 
and we have a contradiction, so $\E$ must be complete.  

On the other hand, assume that $\widetilde{T}$ can be ordered to an exceptional sequence $\E_T$,
and that $T$ is partial silting but not silting. Then there is some $R[j]$ in $\D$, with $R$ not in $\add T$,
such that $T \oplus R[j]$ is partial silting. Note that $j$ might be one of the shifts already occurring in
the decomposition of $T$, in which case we add $R$ to $T_j$. Then, using (i), we get a contradiction to
the completeness of $\E_T$.

(b) This is a direct consequence of (a)(ii).

(c) Part (i) follows directly from Lemma \ref{silting}, while part (ii) follows
from (b) in combination with Lemma \ref{silting}.

\end{proof}

\subsection{Silting objects and maximal $m$-rigid objects}ß
Here we show that silting objects are closely related to $m$-cluster tilting objects 
in $m$-cluster categories.

Recall that for an integer $m \geq 1$, the $m$-cluster category of a finite dimensional hereditary
algebra $H$ is the orbit category $\C_m =\D / \tau^{-1}[m]$, where $\tau$ is the AR-translation in $\D$, see \cite{happelbook},
and $[m]$ is the $m$-fold composition of $[1]$, the shift functor. 
This category is known to be triangulated by \cite{k}. It was first studied in
the case $m= 1$ in \cite{bmrrt}. An object $T$ in $\C_m$ is said to be {\em $m$-rigid} if
$\Ext_{\C_m}^i(T,T) = 0$ for $i= 1, \dots, m$; and if $T$ is maximal with this property,
then it is said to be {\em maximal $m$-rigid}. 
This has been shown \cite{w,zz} to be equivalent to $T$ being $m$-cluster tilting, that is $\Ext^i_{\C_m}(T,X) = 0$
if and only if $X$ is in  $\add T$. The maximal $m$-rigid objects are known to have exactly $n$ non-isomorphic
indecomposable summands \cite{z}, see also \cite{w}.

Now $\C_m$ is a Krull-Schmidt category, and
we fix $$\S_m = \module H[0] \vee \module H[1] \vee \cdots \vee \module
H[m-1] \vee H[m].$$ 
Then
$\S_m$ is a fundamental domain for $\C_m$ in $\D$; 
this means that the map from isomorphism
classes of objects in $\S_m$ to isomorphism classes of objects
in $\C_m$ is bijective.  

It is easy to see the following (see \cite[Lemma 1.1]{w}).

\begin{lemma}\label{wra}
Let $T$ be an object in $\S_m$, then $\Ext_{\D}^i(T,T) = 0$ for $i= 1, \dots, m$ if and only
if $\Ext_{\C_m}^i(T,T) = 0$ for $i= 1, \dots, m$.
\end{lemma}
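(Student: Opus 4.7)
The approach is via the orbit category Hom formula. Setting $F = \tau^{-1}[m]$, for any $X, Y \in \D$ one has
\[
\Hom_{\C_m}(X, Y) \;=\; \bigoplus_{j \in \mathbb{Z}} \Hom_{\D}(X, F^j Y),
\]
so taking $Y = T[i]$ yields
\[
\Ext^i_{\C_m}(T, T) \;=\; \bigoplus_{j \in \mathbb{Z}} \Hom_{\D}(T, \tau^{-j} T[mj + i]).
\]
The $j = 0$ summand is precisely $\Ext^i_{\D}(T, T)$. This makes one direction immediate: if $\Ext^i_{\C_m}(T, T) = 0$, then its direct summand $\Ext^i_{\D}(T, T)$ vanishes. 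For the converse, the plan is to show that every summand with $j \neq 0$ vanishes under the hypotheses $T \in \S_m$ and $1 \leq i \leq m$.

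For $j \geq 1$, I would argue directly. Writing $T = \bigoplus_{a = 0}^m T_a[a]$ with each $T_a \in \module H$ and $T_m$ projective (as required by $T \in \S_m$), each contribution expands as $\Hom_{\D}(T_a, \tau^{-j} T_b[b - a + mj + i])$. Decomposing $\tau^{-j} T_b$ as a finite direct sum of shifts $Y_k[k]$ of modules (with $k \in [0, j]$, arising from iterated applications of $\tau^{-1}$ hitting injective summands at intermediate stages), this reduces to a sum of groups of the form $\Ext^{k + b - a + mj + i}_H(T_a, Y_k)$. Since $H$ is hereditary only shifts in $\{0, 1\}$ can give something nonzero; a bookkeeping check gives $k + b - a + mj + i \geq 1$ in every case, with equality forced only at the boundary $(a, b, j, i, k) = (m, 0, 1, 1, 0)$. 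In that lone case the contribution is $\Ext^1_H(T_m, -)$, which vanishes because $T_m$ is projective.

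For $j \leq -1$, I would use Serre duality in $\D = D^b(H)$, whose Serre functor is $\tau[1]$, to obtain
\[
\Hom_{\D}(T, F^j T[i]) \;\cong\; D\,\Hom_{\D}(T, \tau^{j+1} T[\,1 - mj - i\,]).
\]
For $j = -1$ the right-hand side simplifies to $D\Ext^{m+1-i}_{\D}(T, T)$, which vanishes by hypothesis since $m + 1 - i \in [1, m]$. For $j \leq -2$, setting $j'' = -j - 1 \geq 1$ and $i'' = m + 1 - i \in [1, m]$, a short rewriting identifies the right-hand side with $D\,\Hom_{\D}(T, F^{j''} T[i''])$, which vanishes by the already-settled Case $j \geq 1$.

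The main obstacle will be the bookkeeping in Case $j \geq 1$: one must describe the cohomological spread of $\tau^{-j} T_b$ carefully (since injective summands of $T_b$, or of intermediate $\tau^{-k}T_b$, create shifts), isolate the single boundary case in which the total shift falls into the hereditary range $\{0, 1\}$, and then exploit projectivity of $T_m$ to kill even that remaining contribution.
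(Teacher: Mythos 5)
Your argument is correct and complete. The paper itself offers no proof of this lemma, simply citing \cite[Lemma 1.1]{w}, and what you give is essentially the standard argument behind that reference: the orbit-category Hom formula $\Hom_{\C_m}(X,Y)=\bigoplus_j\Hom_{\D}(X,F^jY)$, the observation that the $j=0$ summand is $\Ext^i_{\D}(T,T)$, a degree count (using that $H$ is hereditary, that $\tau^{-j}T_b$ spreads over degrees $0,\dots,j$, and that the degree-$m$ part of an object of $\S_m$ is projective) to kill all $j\geq 1$ summands unconditionally, and Serre duality with $S=\tau[1]$ to fold the $j\leq -1$ summands back onto the hypothesis and the $j\geq 1$ case.
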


We then obtain the following.

\begin{proposition}\label{siltingtocluster}
Let $H$ be a finite dimensional hereditary algebra, and 
let $\S_m$ be the fundamental domain as above for the $m$-cluster category $\C_m$ of $H$.
Let $T$ be an object in $\S_m$.
Then we have the following:
\begin{itemize}
\item[(a)] $T$ is a partial silting object in $\D$ if and only if $T$ is rigid in $\C_m$.
\item[(b)] $T$ is a silting object in $\D$ if and only if $T$ is an $m$-cluster tilting object in $\C_m$.
\end{itemize}
\end{proposition}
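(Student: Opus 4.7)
The plan is to reduce both equivalences to Lemma \ref{wra} by verifying that, for any $T\in\S_m$, all higher self-extensions $\Ext^k_\D(T,T)$ with $k>m$ vanish automatically. Write $T=\bigoplus_{i=0}^{m}T_i[i]$ with $T_0,\dots,T_{m-1}\in\mod H$ arbitrary and $T_m\in\add H$ (projective), as forced by the definition of $\S_m$. Using the standard identification $\Ext^k_\D(T_i[i],T_j[j])\cong\Ext^{k+j-i}_H(T_i,T_j)$ together with the heredity of $H$, the groups in degree $k>0$ can be nonzero only when $k\in\{i-j,i-j+1\}$ with $0\le i,j\le m$. This forces $k\le m+1$, and the extremal case $k=m+1$ only arises for $(i,j)=(m,0)$, where the relevant group is $\Ext^1_H(T_m,T_0)$, vanishing because $T_m$ is projective.

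Granting the vanishing above, $T$ is partial silting if and only if $\Ext^k_\D(T,T)=0$ for $1\le k\le m$, and Lemma \ref{wra} translates this into $\Ext^k_{\C_m}(T,T)=0$ for $1\le k\le m$, i.e.\ rigidity in $\C_m$. This proves (a).

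For (b), I would combine (a) with the counting of indecomposable summands. If $T$ is silting, then by (a) it is rigid in $\C_m$; by Lemma \ref{silting2}(b) it has exactly $n$ indecomposable summands; and since maximal $m$-rigid objects (equivalently $m$-cluster tilting objects) in $\C_m$ are characterized by having $n$ summands \cite{z,w}, $T$ is $m$-cluster tilting. Conversely, if $T$ is $m$-cluster tilting, it is rigid in $\C_m$ with $n$ summands, so by (a) it is partial silting, and the decomposition $\bigoplus T_i[i]$ yields, via Lemma \ref{silting2}(a)(i), an exceptional sequence of length $n$; this is complete, so Lemma \ref{silting2}(a)(ii) gives that $T$ is silting.

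The only genuinely delicate point, and the place where the hypothesis $T\in\S_m$ is used in an essential way, is the vanishing of the degree-$(m+1)$ self-extension. This depends on the asymmetric choice of fundamental domain, which restricts the top-degree slot of $\S_m$ to $\add H$; without that restriction, $\Ext^{m+1}_\D(T,T)$ could be nonzero and the reduction of partial silting to rigidity in $\C_m$ would fail. Everything else is bookkeeping that packages Lemma \ref{wra}, Lemma \ref{silting2} and the summand count from \cite{z,w}.
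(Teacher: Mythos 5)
Your proof is correct and takes essentially the same route as the paper: both arguments reduce (a) to Lemma \ref{wra} by checking that for $T\in\S_m$ all self-extensions in degrees above $m$ vanish automatically (using heredity of $H$ plus the fact that the degree-$m$ slot of $\S_m$ is restricted to $\add H$, so the only potentially nonzero group $\Ext^{m+1}_\D$, coming from $\Ext^1_H(T_m,T_0)$ with $T_m$ projective, is zero), and both deduce (b) from (a) by the characterization of silting objects, respectively $m$-cluster tilting objects, as the partial silting, respectively $m$-rigid, objects with exactly $n$ indecomposable summands.
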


\begin{proof}

For (a), assume first that $T$ is a partial silting object, then it is $m$-rigid by Lemma \ref{wra}.

Assume $T$ is $m$-rigid, so that $\Ext_{\D}^i(T,T) = 0$ for $i= 1, \dots, m$. Let $X,Y$ be in $\module H$,
and assume $X[r], Y[s]$ are in $\S_m$. If $r \neq m$, then $r-s <m$. So if $t>m$, then $t- (r-s) > 1$, and hence
$\Ext^t_{\D}(X[r],Y[s]) = \Hom_{D}(X,Y[t-(r-s)])= 0$.
If $r =m$, then for $t>m$ we have that $s+t-m \geq 1$, and we have $\Ext^t_{\D}(X[m],Y[s]) = 
\Hom_{D}(X,Y[(s+t-m)])$. This vanishes, since it follows from $r=m$ that $X$ must be projective.

(b): This follows from (a), since silting objects can be characterized
among partial silting objects as those with $n$ non-isomorphic 
indecomposable summands, and similarly for 
$m$-cluster tilting objects among $m$-rigid objects.  
\end{proof}

\subsection{An example}
The following example demonstrates that it is not possible to fix a 
placement of each exceptional $H$-module into $\D$ sending exceptional
sequences to silting objects.  

Consider the following simple example.
Let  $H = kQ$, where $Q$ is the quiver 
$$
\xymatrix{
1 & 2 \ar[l]  
}
$$
Then $\module H$ has three indecomposable modules, the simples $S_1$ and $S_2$, and the projective $P_2$ corresponding to
vertex 2. The complete exceptional sequences are 
$$(S_1, P_2)  \text{   ,   } (P_2, S_2) \text{   and  } (S_2,S_1)$$

Suppose the placements of $S_1, P_2, S_2$ are $S_1[a], P_2[b], S_2[c]$.
It is straightforward to check the following:
\begin{itemize}
\item[-] $S_1[a] \oplus P_2[b]$ is a silting object only if $a \leq b$
\item[-] $P_2[b] \oplus S_2[c]$ is a silting object only if $b \leq c$
\item[-] $S_2[c] \oplus S_1[a]$ is a silting object only if $c < a$
\end{itemize}
Clearly, we cannot have all three of
the above conditions simultaneously. 
Hence, it is impossible
to define a placement simultaneously for all exceptional $H$-modules taking
exceptional sequences to silting objects.

\section{Connections between mutations}

In this section we compare the mutation operations in our three settings. 

We have already discussed mutation for exceptional sequences. 
Recall that $\S_m$ denotes the full subcategory of $\D$ generated by 
$$\module H[0] \vee \module H[1] \vee \cdots \vee \module H[m-1] \vee H[m].$$
For an $H$-module $X$, we say that $X[v]$ has degree $v$, denoted by $d(X[v]) = v$.  
When we refer to the degree of an object in $\C_m$, we mean the degree
of its lifting to $\S_m$.  
We say that a map $f \colon X \to Y$ in the $m$-cluster category is a $D$-map if it is induced 
by a map $X \to Y $ in the standard domain $\S_m$.  
Otherwise we say that it
is an $F$-map.  

The following is proved in \cite{zz} (and (a) also in \cite{w}).

\begin{proposition}\label{knownfromzz}
Let $T = T_1 \oplus \cdots \oplus T_n$ be a cluster tiling object in $\C_m$.
Fix $j$ such that $1\leq j \leq n$.
\begin{itemize}
\item[(a)] There are exactly $m+1$ non-isomorphic indecomposable objects $M_0, \dots, M_m$, such that $T/T_j \oplus M_i$ is an 
$m$-cluster tilting object.
\item[(b)] The indecomposable objects $M_0,\dots,M_m$ of part (a) can 
be numbered so that there are triangles 
$$M_{i-1} \overset{f_i}{\rightarrow} B_{i} \overset{g_i}{\rightarrow} M_{i} \to $$ for $i= 1, \dots, m$, 
with indices computed modulo $m$,
such that each $f_i$ is a minimal left $\add T/T_j$-approximation in $\C_m$ and each $g_i$ is a 
minimal right $\add T/T_j$-approximation in
$\C_m$. 
\item[(c)] For $0 \leq i \leq m-1$, we have that $d(M_i) \leq d(M_{i+1})$. 
\item[(d)] For $0 \leq i \leq m$, we have that $i-1 \leq d(M_i)\leq i$.
\end{itemize} 
\end{proposition}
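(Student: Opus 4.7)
Part (a) is Wraalsen's theorem from \cite{w} and the full statement is from \cite{zz}, so the plan would be to appeal to those references; let me nevertheless describe the line of argument I would follow for a self-contained proof.

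For (a) and (b), I would work inside $\C_m$ and exploit that it is $(m+1)$-Calabi--Yau (in the sense of Keller). Fix any complement $M$ of $T/T_j$, let $g\colon B\to M$ be a minimal right $\add(T/T_j)$-approximation, and complete to a triangle $M'\to B\to M\to M'[1]$ in $\C_m$. Using rigidity of $T/T_j$, I would verify that $M'$ is again an indecomposable complement distinct from $M$, and that the connecting map realises $f\colon M'\to B$ as a minimal left $\add(T/T_j)$-approximation by Calabi--Yau duality. Iterating produces an infinite sequence of complements threaded by exchange triangles. A dimension count of $\Ext^1_{\C_m}(M_{i-1},M_i)$ along the cycle, combined with the Calabi--Yau symmetry identifying forward and backward iterations, forces the sequence to be periodic of period exactly $m+1$, which simultaneously establishes the count in (a) and the triangles in (b).

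For (c) and (d), I would lift each exchange triangle from $\C_m$ back to $\D$ via the fundamental domain $\S_m$ and track degrees. The triangle $M_{i-1}\to B_i\to M_i$ lifts to a genuine triangle in $\D$ whenever $f_i$ is a $D$-map; since $H$ is hereditary the lift of $B_i$ lies in $\add(T/T_j)$ across at most two consecutive degrees, and this forces $d(M_i)-d(M_{i-1})\in\{0,1\}$, giving the monotonicity in (c). When $f_i$ is an $F$-map, the natural lift of $M_i$ escapes $\S_m$ and must be twisted by $\tau^{-1}[m]$ to return, a step which contributes $m+1$ to the raw degree shift. I would then prove $i-1\leq d(M_i)\leq i$ by induction on $i$, indexing the cycle so that $M_0$ attains the minimal degree and using that the total raw degree change around the full cycle of $m+1$ triangles equals $m+1$.

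The main obstacle will be (c) and (d): the bookkeeping needed to determine, at each step, whether the lifted triangle lives inside $\S_m$ or escapes via an $F$-map, and then to show that the $F$-map phenomenon occurs exactly once in each full cycle. The heredity of $H$ is crucial here, since it constrains $\Ext^{>1}_{\D}$ to vanish and thereby forces each lifted exchange triangle to span at most two consecutive degrees; without this, the tight bound $i-1\leq d(M_i)\leq i$ would collapse.
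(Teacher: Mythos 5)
Your primary move---appealing to \cite{w} for (a) and to \cite{zz} for the full statement---is exactly what the paper does: Proposition~\ref{knownfromzz} is stated there as a known result with no proof given beyond the citation, so your proposal matches the paper's approach. The supplementary sketch you add (exchange triangles via approximations, Calabi--Yau duality, and degree bookkeeping in the fundamental domain) is a reasonable outline of the arguments in those references, though the paper itself never carries it out and some of your bookkeeping (e.g.\ the total degree shift around the cycle being $m+1$ rather than $m$, up to the effect of $\tau^{-1}$ on projectives) would need care if you were to make it precise.
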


By Proposition \ref{siltingtocluster} it follows that an almost complete $m$-cluster tilting object 
$T/T_j$ can be considered 
an almost complete silting object in $\D$, lying in $\S_m$. It also follows that the complements 
of $T/T_j$ in $\C_m$ coincide with those complements of $T/T_j$ (as an almost complete silting object)
which lie in $\S_m$.

In this section we show that mutation of $m$-cluster tilting objects can be naturally interpreted as a mutation of 
silting objects. We also point out some simple examples demonstrating that 
connections to mutation of exceptional sequences are more involved. 

\subsection{Mutation of silting objects and cluster tilting objects}

We begin with a lemma, which is an adaption of a similar statement for
tilting modules or cluster tilting objects. The proof is also similar.

\begin{lemma}\label{silting-exchange}
Let $\overline{T}$ be an almost complete silting object, and let 
$M$ be a complement.
\begin{itemize}
\item[(a)]
Let  
\begin{equation}\label{exex}
M^{\ast} \overset{f}{\rightarrow} B \overset{g}{\rightarrow} M \to 
\end{equation}
be a triangle such that
$g$ is a minimal right $\add \overline{T}$-approximation. Then $M^{\ast}$ is a complement to $\overline{T}$, with
$M^{\ast} \not \simeq M$, and with $f$ a minimal left $\add \overline{T}$-approximation.
\item[(b)]
Let  
\begin{equation}\label{exex2}
M \overset{f'}{\rightarrow} B' \overset{g'}{\rightarrow} M^{\sharp} \to 
\end{equation}
be a triangle such that
$f'$ is a minimal left $\add \overline{T}$-approximation. Then $M^{\sharp}$ is a complement to $\overline{T}$, with
$M^{\sharp} \not \simeq M$, and with $g'$ a minimal right $\add \overline{T}$-approximation.
\end{itemize}
\end{lemma}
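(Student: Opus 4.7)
The plan is to prove part (a) in detail; part (b) then follows by the dual argument (equivalently, by applying (a) in the opposite triangulated category). The strategy is standard for mutation-of-silting/tilting results: push the silting hypothesis on $\overline{T}\oplus M$, together with $B\in\add\overline{T}$ and the approximation property of $g$, through the long exact sequences obtained from the triangle (\ref{exex}) to deduce that $\overline{T}\oplus M^{\ast}$ is again silting. Then use minimality of $g$ together with the bound in Lemma~\ref{silting2}(b) to extract indecomposability of $M^{\ast}$ and the remaining assertions.

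The core computation is to verify $\Ext^i(\overline{T}\oplus M^{\ast},\overline{T}\oplus M^{\ast})=0$ for $i>0$. First, applying $\Hom(\overline{T},-)$ to (\ref{exex}) and using $\Ext^i(\overline{T},B)=0=\Ext^i(\overline{T},M)$ for $i>0$ (from $B\in\add\overline{T}$ and the silting of $\overline{T}\oplus M$), together with the surjectivity of $\Hom(\overline{T},B)\to\Hom(\overline{T},M)$ coming from the approximation property of $g$, yields $\Ext^i(\overline{T},M^{\ast})=0$ for all $i>0$. A symmetric application of $\Hom(-,\overline{T})$ gives $\Ext^i(M^{\ast},\overline{T})=0$, and at the same time shows that $f$ is a left $\add\overline{T}$-approximation, since the map $\Hom(B,\overline{T})\to\Hom(M^{\ast},\overline{T})$ is surjective. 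To get $\Ext^i(M^{\ast},M^{\ast})=0$, I would apply $\Hom(M,-)$ to (\ref{exex}) to deduce $\Ext^i(M,M^{\ast})=0$ for $i\geq 2$, and then apply $\Hom(-,M^{\ast})$ to (\ref{exex}), invoking the already established vanishing $\Ext^i(\overline{T},M^{\ast})=0$ to kill the $\Ext^i(B,M^{\ast})$ terms, producing isomorphisms $\Ext^i(M^{\ast},M^{\ast})\cong\Ext^{i+1}(M,M^{\ast})=0$ for $i\geq 1$.

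Once $\overline{T}\oplus M^{\ast}$ is known to be partial silting, Lemma~\ref{silting2}(b) bounds its number of indecomposable summands by $n$, so $M^{\ast}$ can contain at most one summand outside $\add\overline{T}$. The triangle (\ref{exex}) is non-split, for otherwise $M$ would be a summand of $B$ and hence lie in $\add\overline{T}$, contradicting that $M$ is a complement; in particular $M^{\ast}\ne 0$, and the same non-splitness rules out $M\simeq M^{\ast}$, since an isomorphism would force the triangle class to sit in $\Ext^1(M,M)=0$. The main obstacle is to exclude any summand of $M^{\ast}$ lying in $\add\overline{T}$: if $N\subseteq M^{\ast}$ with $N\in\add\overline{T}$, then because $f$ is a left $\add\overline{T}$-approximation the identity of $N$ factors through $f|_N$, making $f|_N$ a split monomorphism; after a change of basis, the triangle splits off a trivial summand $N\to N\to 0$, producing a right $\add\overline{T}$-approximation of $M$ through a proper summand of $B$, which contradicts minimality of $g$. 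Thus $M^{\ast}$ is indecomposable, $\overline{T}\oplus M^{\ast}$ is silting, and a symmetric peeling argument—if $B=B'\oplus B''$ with $f$ factoring through $B'$, then $B''$ becomes a summand of $M$, forcing $B''=0$—establishes minimality of $f$.
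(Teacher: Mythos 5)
Your overall strategy matches the paper's: both proofs push the silting hypothesis through the long exact sequences obtained from the triangle (\ref{exex}), establish that $f$ is a left $\add \overline{T}$-approximation from the surjection $\Hom(B,\overline{T})\to\Hom(M^{\ast},\overline{T})$, and use minimality of $g$ to control summands. Two of your steps genuinely improve on or supplement the paper. First, for the self-extensions you obtain $\Ext^1(M^{\ast},M^{\ast})$ as a subquotient of $\Ext^2(M,M^{\ast})$, which vanishes by a second long exact sequence; this replaces the paper's more laborious argument, which proves surjectivity of $\Hom(M^{\ast},B)\to\Hom(M^{\ast},M)$ via an explicit factorization $h=gsf$ through the approximation $g$. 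Your degree-shift argument is correct (the terms $\Ext^{i}(B,M^{\ast})$ vanish because $\Ext^{i}(\overline{T},M^{\ast})=0$ has already been established) and is cleaner. Second, you actually verify $M^{\ast}\not\simeq M$ (via non-splitness of the triangle and $\Ext^1(M,M)=0$), a point the statement asserts but the paper's proof never addresses.

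There is, however, one soft spot in your indecomposability argument. Counting isomorphism classes of summands of the partial silting object $\overline{T}\oplus M^{\ast}$ (note that the relevant bound comes from Lemma \ref{silting2}(a)(i) together with Proposition \ref{known}(a), since \ref{silting2}(b) as stated concerns silting objects) only shows that at most one isomorphism class of summands of $M^{\ast}$ lies outside $\add\overline{T}$; combined with your (correct) peeling argument for summands in $\add\overline{T}$, this still leaves open the possibility $M^{\ast}\cong N^{k}$ with $k\geq 2$ for a single indecomposable $N\notin\add\overline{T}$, since multiplicities are invisible to the rigidity conditions. The paper closes this by decomposing $M^{\ast}=U\oplus V$ arbitrarily, taking minimal left $\add\overline{T}$-approximations of each piece, and using that the direct sum of the resulting triangles must be (\ref{exex}) (here minimality of $f$ is needed first), so that $M\cong X\oplus Y$ forces one cone to vanish and right minimality of $g$ then kills the corresponding piece of $M^{\ast}$. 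You should add this step, or some equivalent, to rule out repeated summands; everything else in your argument, including the minimality of $f$ at the end, is sound.
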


\begin{proof}
We prove only (a); the proof of (b) is dual.
By applying $\Hom_{\D}(\overline{T}, \ )$ and
$\Hom_{\D}(\ , \overline{T})$ to the triangle (\ref{exex}), and considering the corresponding long exact sequences,
we get that $\Ext^i(\overline{T},M^{\ast}) = \Ext^i(M^{\ast}, \overline{T}) = 0$ for $i>0$. Note that, in particular,
we use that $g$ is a right $\add \overline{T}$-approximation to obtain that $\Ext^1(\overline{T},M^{\ast})$ 
vanishes.

From the exact sequence
$$\Hom(B, \overline{T}) \to \Hom(M^{\ast}, \overline{T}) \to \Ext^1(M, \overline{T})= 0$$
it follows that $f$ is a left $\add \overline{T}$-approximation.

It is clear that $f$ must be left minimal, since $M$ is indecomposable, and not a direct summand in $B$.

We claim that $M^{\ast}$ must be indecomposable. Assume to the contrary that $M^{\ast} = U \oplus V$,
with $U$ and $V$ both non-zero. Consider minimal left $\add \overline{T}$-approximations 
$f_1 \colon U \to B_1$ and $f_2 \colon V \to B_2$, and complete to get triangles
$$U \to B_1 \to X \to \text{  and  } V \to B_2 \to Y \to $$ 
Since the direct sum of the triangles is the triangle (\ref{exex}),
it follows that $M \simeq X \oplus Y$, and hence either $X= 0$ or $Y= 0$. If $X=0$, then $B_1 \to 0$ is
a direct summand of $f \colon B \to M$, which contradicts that $g$ is right minimal.
Similarly $Y=0$ leads to a contradiction, so we get that $M^{\ast}$ is indecomposable.

Clearly $M^{\ast}$ is not in $\add \overline{T}$, since then $f$ would be an isomorphism, contradicting
$M \neq 0$.

We are now left with showing $\Ext^i(M^{\ast}, M^{\ast}) = 0$ for $i>0$. 
Applying $\Hom(\ , M)$ to the triangle (\ref{exex}), considering the corresponding long exact sequence,
and using that
$\Ext^1(M,M) = 0$, we get an epimorphism
\begin{equation}\label{firstepi} \Hom(B,M) \to \Hom(M^{\ast}, M) \end{equation}
So any map $h \colon M^{\ast} \to M$ factors through $f \colon M^{\ast} \to B$.

Then we apply $\Hom(M^{\ast}, \ )$ to the triangle (\ref{exex}) and consider
the resulting exact sequence
$$\Hom(M^{\ast}, B) \to \Hom(M^{\ast},M) \to \Ext^1(M^{\ast},M^{\ast}) \to  \Ext^1(M^{\ast},B)$$
Since the rightmost term vanishes, it is sufficient to show that $\Hom(M^{\ast}, B) \to \Hom(M^{\ast},M)$
is surjective in order to obtain $\Ext^1(M^{\ast}, M^{\ast}) = 0$.
To see that any map $h \colon M^{\ast} \to M$ factors through $g \colon B \to M$, consider
the commutative diagram
$$
\xymatrix{
& & B \ar^t[d] \ar@{-->}_s[dl] \\
M^{\ast} \ar_h[d] \ar^f[r]& B \ar_g[r] \ar@{-->}^t[dl]& M \\
M & & 
}
$$

Here $t$ is obtained from the epimorpism (\ref{firstepi}), and we get $s \colon B \to B$ by using
that $g$ is a right $\add \overline{T}$-approximation. So $h= tf = gsf$.

Applying $\Hom(\ , M)$ to the triangle (\ref{exex}), and considering the resulting long exact
sequence, we obtain $\Ext^i(M^{\ast},M)=0$ for $i \geq 1$.

It then follows from the long exact sequence obtained by 
applying $\Hom(M^{\ast}, \ )$ to the triangle (\ref{exex}) that
$\Ext^i(M^{\ast}, M^{\ast}) = 0$ for $i>1$. 
This finishes the proof.
\end{proof}

The triangles appearing 
in Lemma \ref{silting-exchange} are called {\em exchange triangles} in $\D$.
We need the following observation.

\begin{lemma}\label{forgetting}
If a minimal left $\add \overline{T}$-approximation $\alpha$ in $\C_m$ is a $D$-map, then it is also 
a minimal left $\add \overline{T}$-approximation in $\D$.  
\end{lemma}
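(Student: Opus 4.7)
The plan is to exploit the explicit description of morphisms in the orbit category $\C_m = \D/\tau^{-1}[m]$. Writing $F = \tau^{-1}[m]$, any object $X,Y$ in $\S_m$ satisfies
\[
\Hom_{\C_m}(X,Y) = \bigoplus_{i \in \mathbb Z} \Hom_{\D}(X, F^i Y),
\]
and a morphism is a $D$-map precisely when its image is concentrated in the summand $i=0$. Composition is given by $g \circ_{\C_m} f = (F^i g)\circ f$ for $f$ of degree $i$. The strategy is to write $\alpha$ as the lift that sits in degree $0$, and to track how this affects the factorizations and endomorphisms used in the definitions of approximation and minimality.

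First I would check the approximation property in $\D$. Let $h\colon M \to T'$ be a morphism in $\D$, with $T' \in \add\overline{T}$. Then $h$ sits in the degree-$0$ summand of $\Hom_{\C_m}(M,T')$. Since $\alpha$ is a left $\add\overline{T}$-approximation in $\C_m$, there exists $\bar s \in \Hom_{\C_m}(B,T')$ with $\bar s\circ_{\C_m}\alpha = h$. Decomposing $\bar s = \sum_j s_j$ with $s_j \colon B \to F^jT'$ in $\D$, and using that $\alpha$ has degree $0$, the composition becomes $\sum_j s_j\circ \alpha$. Matching the $j=0$ component with $h$ yields $s_0\circ \alpha = h$ in $\D$, so $s_0 \colon B\to T'$ is the desired factorization in $\D$; thus $\alpha$ is a left $\add\overline T$-approximation in $\D$.

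Next I would verify minimality in $\D$. Suppose $g \in \End_{\D}(B)$ satisfies $g\circ \alpha = \alpha$. Then $\bar g \circ_{\C_m}\alpha = \alpha$ in $\C_m$, so by minimality of $\alpha$ in $\C_m$ there is an inverse $\bar g^{-1} = \sum_j u_j$ with $u_j\colon B\to F^jB$. The identity $\bar g\circ_{\C_m}\bar g^{-1} = \mathrm{id}_B$ lies in degree $0$, and expanding gives $(F^j g)\circ u_j$ in degree $j$, so matching the degree-$0$ component yields $g\circ u_0 = \mathrm{id}_B$ in $\D$, and the opposite composition gives $u_0\circ g = \mathrm{id}_B$. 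Hence $g$ is an automorphism of $B$ in $\D$, proving left minimality.

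The argument is essentially formal bookkeeping in the grading on $\Hom_{\C_m}$; the only potential subtlety is checking that the composition of a degree-$0$ map with a degree-$j$ map stays in degree $j$, which is exactly the formula $g\circ_{\C_m} f = (F^i g)\circ f$. Because $\alpha$ is assumed to be a $D$-map (degree $0$), nothing mixes degrees in an awkward way, and both the approximation property and left minimality transfer cleanly from $\C_m$ to $\D$.
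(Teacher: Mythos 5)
Your argument is correct. Note that the paper does not actually supply a proof of Lemma \ref{forgetting}; it is stated as an observation whose verification is left to the reader, so there is no ``official'' proof to compare against. Your degree-bookkeeping argument is the natural way to fill this in: using $\Hom_{\C_m}(X,Y)=\bigoplus_i\Hom_{\D}(X,F^iY)$ with $F=\tau^{-1}[m]$ (valid here because $H$ is hereditary, so Keller's theorem applies to the orbit category itself and no triangulated hull is needed), the composition rule $g\circ_{\C_m}f=(F^ig)\circ f$ shows that composing with the degree-$0$ map $\alpha$ preserves the grading, and projecting onto the degree-$0$ component transfers both the factorization property and the invertibility statement from $\C_m$ to $\D$. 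Both halves of your argument check out: for the approximation property, the degree-$0$ component $s_0$ of the factorizing map in $\C_m$ does the job in $\D$; for minimality, the degree-$0$ component $u_0$ of the $\C_m$-inverse of $\bar g$ is a two-sided inverse of $g$ in $\D$, since both $\bar g\circ_{\C_m}\bar g^{-1}$ and $\bar g^{-1}\circ_{\C_m}\bar g$ have identity as their degree-$0$ component. The only point worth making explicit is that the objects $M$, $B$, and $T'$ are taken in the fundamental domain $\S_m$, so that the identification of $\Hom_{\D}$ with the degree-$0$ summand of $\Hom_{\C_m}$ is the one the paper's notion of $D$-map refers to; with that understood, the proof is complete.
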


For our main result in this section, we also need the following two lemmas.

\begin{lemma}\label{dmaps}
Let $\overline{T}$ be an almost complete silting object 
with a complement $M$.
Let $m \geq 2$, and
let 
\begin{equation}\label{exinc}
M^{\ast} \overset{f}{\rightarrow} B \overset{g}{\rightarrow} M \to 
\end{equation} 
be an exchange triangle in $\C_m$ with $B \neq 0$.
Then $d(M^{\ast}) \leq d(M)$ if and only if both $f$ and $g$ are $D$-maps.

\end{lemma}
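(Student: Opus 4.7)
For $(\Leftarrow)$: Assume $f$ and $g$ are both $D$-maps. By Lemma \ref{forgetting} and its obvious dual, $f$ and $g$ are then minimal $\add \overline{T}$-approximations in $\D$ as well, so the exchange triangle in $\C_m$ lifts to a distinguished triangle
$$M^* \overset{f}{\rightarrow} B \overset{g}{\rightarrow} M \to M^*[1]$$
in $\D$. Write $M = Y[d]$ and $M^* = X[d_*]$ with $X, Y \in \mod H$. For each indecomposable summand $T'[k]$ of $B$, minimality of $g$ and $f$ in $\D$ force $\Hom_\D(T'[k], Y[d]) \neq 0$ and $\Hom_\D(X[d_*], T'[k]) \neq 0$. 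Since $H$ is hereditary, these translate into $k \in \{d-1, d\}$ and $k \in \{d_*, d_*+1\}$ respectively. As $B \neq 0$, some such $k$ exists, and these two two-element sets can overlap only if $d_* \leq d$.

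For $(\Rightarrow)$: Assume $d(M^*) \leq d(M)$. I would construct the silting mutation of $M$ in $\D$ and identify it with $M^*$ in $\C_m$. Pick a minimal right $\add \overline T$-approximation $\tilde g \colon \tilde B \to M$ in $\D$ and complete it to a triangle
$$N \to \tilde B \overset{\tilde g}{\rightarrow} M \to N[1]$$
in $\D$. Applying Lemma \ref{silting-exchange}(a) in $\D$ to the silting object $\overline T \oplus M$ gives that $\overline T \oplus N$ is silting in $\D$. Repeating the degree analysis from the first direction, every summand of $\tilde B$ has degree in $\{d-1, d\}$; a case-by-case analysis of the mapping cone (splitting $\tilde B$ by degree and using that each module-level component of $\tilde g$ is either a monomorphism or an epimorphism) then shows $N \in \S_m$ with $d(N) \in \{d-1, d\}$. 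By Proposition \ref{siltingtocluster}(b), $N$ is therefore an indecomposable complement of $\overline T$ in $\C_m$. Since $N$ sits in a right-approximation triangle over $M$ and has $d(N) \leq d(M)$, Proposition \ref{knownfromzz}(c) together with the uniqueness of the exchange triangle in $\C_m$ force $N \simeq M^*$: the only other candidate adjacent to $M$ in the cyclic list of complements is the ``wrap-around'' one, which has strictly larger degree than $d(M)$. Hence the $\D$-triangle descends to the given $\C_m$-triangle, and $f, g$ are $D$-maps.

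The main obstacle is this identification step: ensuring that the $\D$-silting mutation descends in $\C_m$ to the exchange triangle picked out by the $\C_m$-approximation rather than to a different triangle involving $F$-maps. The hypothesis $d(M^*) \leq d(M)$ is precisely what rules out the wrap-around case, in which the $\D$-silting mutation of $M$ would produce an object outside $\S_m$ that returns to $\S_m$ only after applying $\tau^{-1}[m]$, thereby introducing $F$-maps into the $\C_m$-approximation. Additional care is required at the boundary degrees $d = 0$ and $d = m$, where $\tilde B$ is concentrated in a single degree and one must use the hypothesis together with the mono-versus-epi dichotomy for $\tilde g$ to guarantee $N \in \S_m$.
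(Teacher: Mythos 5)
Your backward direction ($D$-maps imply $d(M^*)\leq d(M)$) is correct; this is the direction the paper treats as immediate. The genuine problems are in the forward direction, where your argument is circular. You form the cocone $N$ of a minimal right $\add \overline{T}$-approximation taken in $\D$, and you then need two things: that $N$ lies in $\S_m$, and that your $\D$-triangle induces the given $\C_m$-exchange triangle. Neither follows from what you have written. First, $\Hom_{\C_m}(T',M)$ is in general strictly larger than $\Hom_{\D}(T',M)$ for $T'\in\add\overline{T}$ (the extra maps are exactly the $F$-maps), so $\tilde g$ need not be a right approximation in $\C_m$ at all and $\tilde B$ need not equal $B$; Lemma \ref{forgetting} only passes from $\C_m$-approximations that happen to be $D$-maps to $\D$-approximations, not the other way. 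Second, $N\in\S_m$ can genuinely fail: for $H=k(1\leftarrow 2)$, $m=2$, $\overline{T}=S_1$ and $M=P_2$, the minimal right approximation in $\D$ is the socle inclusion $S_1\hookrightarrow P_2$, whose cocone is $S_2[-1]\notin\S_2$. You propose to exclude this using the hypothesis $d(M^*)\leq d(M)$, but that hypothesis is a statement about $M^*$, which is \emph{defined} by the approximation in $\C_m$; it carries no information about $N$ until you have identified $N$ with $M^*$, which is precisely what you are trying to establish. Similarly, knowing that $\overline{T}\oplus N$ is silting with $N\in\S_m$ only makes $N$ one of the $m+1$ complements; its position next to $M$ in the cyclic list is determined by the $\C_m$-exchange triangles, so placing it there again requires the comparison of approximations that is missing.

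The paper argues the contrapositive directly, which avoids all of this. The key observation is that a non-$D$-map between indecomposables $X\to Y$ in $\S_m$ forces $d(Y)=0$ and $d(X)\geq m-1$. If $f$ is an $F$-map, then $B$ has a summand of degree $0$, hence $d(M)\leq 1$, while $d(M^*)\geq m-1$; if $f$ is a $D$-map but $g$ is an $F$-map, then $d(M)=0$ and $d(M^*)\geq m-2$. For $m\geq 3$ this already gives $d(M^*)>d(M)$, and the residual $m=2$ configurations are eliminated by showing that the corresponding triangle in $\D$ would have to split. If you want to salvage your construction, the missing ingredient is exactly this degree analysis, showing that under the hypothesis $d(M^*)\leq d(M)$ there are no $F$-maps from $\add\overline{T}$ to $M$ nor from $M^*$ to $\add\overline{T}$ --- at which point you will have essentially reproduced the paper's proof.
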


\begin{proof}
If both $f$ and $g$ are $D$-maps, then clearly $d(M^{\ast}) \leq d(M)$.

Note that if $X \to Y$ is not a $D$-map, with $X$ and $Y$ indecomposable, we must have $d(Y) = 0$ and $d(X) \geq m-1$.

Assume $f$ is not a $D$-map. 
Then the middle term 
$B$ must have a summand of degree $0$, and hence $d(M) \leq 1$.
Also, we have that $d(M^{\ast})$ must be $m-1$ or $m$. 
If $m \geq 3$, then clearly $d(M^{\ast}) > d(M)$.  
Consider the case $m=2$. If $d(M^{\ast})= 2$, then trivially  $d(M^{\ast}) > d(M)$. Assume $d(M^{\ast})= 1$.
We already know that $d(M) \neq 2$, and now we claim that $d(M) \neq 1$. Assume $d(M)=1$. The exchange triangle (\ref{exinc}) must be
induced either (i) by an exact sequence in $\module H$, or (ii) by a triangle 
$$M^{\ast} \overset{f}{\rightarrow} B' \overset{g}{\rightarrow} \tau^{-1} M[2] \to $$ in the derived category $\D$.
Then (i) is not possible, by the assumption that $f$ is an $F$-map. Also 
(ii) is not possible, since a map $\tau^{-1} M[2] \to M^{\ast}[1]$
is zero in the derived category, and hence the triangle must split, which is a contradiction. 
Hence $d(M) = 0$.

Assume $f$ is a $D$-map, but $g$ is not. 
Since $g$ is not a $D$-map, then $B$ must have a direct summand in degree $m-1$ or $m$,
and $d(M) = 0$. Since $f$ is $D$-map we have $d(M^{\ast}) \geq m-2$.
So for $m \geq 3$, we clearly have $d(M^{\ast}) > d(M)$. 
Consider the case $m = 2$, and assume $d(M^{\ast}) = d(M) = 0$. 
Then $B$ must have a summand of degree $>0$, and as above this implies that the 
exchange triangle (\ref{exinc}) must be
a triangle 
$$M^{\ast} \overset{f}{\rightarrow} B' \overset{g}{\rightarrow} \tau^{-1} M[2] \to $$ in the derived category $\D$,
which is not possible, since a map $\tau^{-1} M[2] \to M^{\ast}[1]$
is zero in the derived category, and hence the triangle must split, which is a contradiction. 
\end{proof}

We can now prove the main result in this section.

\begin{theorem}
Let $T = T_1 \oplus \cdots \oplus T_n$ be a basic silting object in $\D = D^b(H)$, where the $T_i$ are indecomposable 
and $n$ is the number of isomorphism-classes of simple $H$-modules. We assume without loss of generality that 
$T$ is in $\D^+$. Let $m$ be an integer such that $T$ is in the fundamental domain $\S_m$ of the $m$-cluster category $\C_m$.
Fix an indecomposable direct summand $T_i$, with $1 \leq i \leq n$.
\begin{itemize}

\item[(a)] The almost complete silting object $T/T_i$ has exactly $m+1$ non-isomorphic complements 
$M_0, M_1, \dots, M_m$ lying in $\S_m$, 
\sloppy which are ordered such that $d(M_i) \leq d(M_{i+1})$. 

\item[(b)] For each $j= 1, \dots, m$, there is a triangle in $\D$
\begin{equation}\label{exchange} 
M_{j-1} \overset{f_j}{\rightarrow} B_j \overset{g_j}{\rightarrow} M_j \to,
\end{equation}
such that $f_j$ is a minimal left $\add T/T_i$-approximation, 
and $g_j$ is a minimal right $\add T/T_i$-approximation. 

\item[(c)] The triangles $(\ref{exchange})$ are also exchange triangles in $\C_m$.

\end{itemize}
\end{theorem}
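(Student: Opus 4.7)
The plan is to deduce all three parts from the corresponding results on $m$-cluster tilting objects in $\C_m$ recorded in Proposition \ref{knownfromzz}, bridging the two settings via Proposition \ref{siltingtocluster} together with Lemmas \ref{silting-exchange}, \ref{forgetting}, and \ref{dmaps}.

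For part (a), I would first use Proposition \ref{siltingtocluster}(b) to identify the silting object $T \in \S_m$ with an $m$-cluster tilting object in $\C_m$, so that $T/T_i$ becomes almost complete $m$-cluster tilting in $\C_m$. Proposition \ref{knownfromzz}(a) then produces $m+1$ non-isomorphic complements in $\C_m$; their unique $\S_m$-lifts are $m+1$ distinct silting complements of $T/T_i$ in $\D$ by Proposition \ref{siltingtocluster}(a). Ordering them as $M_0, \dots, M_m$ by increasing degree is then exactly Proposition \ref{knownfromzz}(c).

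For (b) and (c), fix $j \in \{1, \dots, m\}$ and begin with the $\C_m$-exchange triangle $M_{j-1} \to B_j \to M_j \to $ of Proposition \ref{knownfromzz}(b), with maps $f_j$ and $g_j$. Since $d(M_{j-1}) \leq d(M_j)$ by part (a), Lemma \ref{dmaps} (for $m \geq 2$; the cluster-category case $m=1$ is classical) forces both $f_j$ and $g_j$ to be $D$-maps. Lemma \ref{forgetting} upgrades the $D$-lift of $f_j$ to a minimal left $\add T/T_i$-approximation in $\D$; completing it to a triangle $M_{j-1} \to B_j \to C \to$ in $\D$ and invoking Lemma \ref{silting-exchange}(b) shows that $C$ is an indecomposable silting complement of $T/T_i$ in $\D$ and that $B_j \to C$ is a minimal right $\add T/T_i$-approximation in $\D$.

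The remaining task, and the main technical obstacle, is to identify $C$ with $M_j$. Projecting the $\D$-triangle to $\C_m$ yields another cone for $f_j$, hence an isomorphism $\pi(C) \cong M_j$ in $\C_m$; so $C$ and $M_j$ agree up to a power of $\tau^{-1}[m]$, and it suffices to verify $C \in \S_m$ for the fundamental-domain bijection to force $C = M_j$. Here I would exploit that $d(M_{j-1}) \leq j-1 \leq m-1$ by Proposition \ref{knownfromzz}(d), together with the standard fact that nonzero morphisms $X[a] \to Y[b]$ between shifts of modules in a hereditary derived category require $b-a \in \{0,1\}$: every summand of $B_j$ reached non-trivially by $f_j$ has degree $a$ or $a+1$ with $a = d(M_{j-1})$, and a short case analysis of the cone (kernel/cokernel when degrees coincide, the middle term of the defining extension when they differ by one, or a mixture) places the indecomposable $C$ in a single degree in $\{a,a+1\} \subseteq \{0, \dots, m\}$. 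The residual point that a summand in the extreme degree $m$ must be an indecomposable projective is forced by comparing with $M_j$ via $\pi(C) \cong M_j$, since any nontrivial translate of $M_j$ by $\tau^{-1}[m]$ would fail to be a single shifted module. Thus $C = M_j$, giving (b); and (c) is then automatic, as the triangle in $\D$ we constructed projects tautologically to the chosen $\C_m$-exchange triangle.
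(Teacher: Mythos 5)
Your proposal is correct and follows essentially the same route as the paper: identify the complements of $T/T_i$ via Proposition \ref{siltingtocluster}, import the $m+1$ complements and exchange triangles from Proposition \ref{knownfromzz}, use Lemmas \ref{dmaps} and \ref{forgetting} to see that the approximation maps are $D$-maps and remain minimal approximations in $\D$, and conclude with Lemma \ref{silting-exchange}. The only notable differences are that you make explicit the identification of the $\D$-cone with $M_j$ (a point the paper leaves implicit) while deferring the $m=1$ case to classical cluster-category facts, whereas the paper instead gives a short explicit case analysis there because Lemma \ref{dmaps} assumes $m\geq 2$.
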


Note that in the $m$-cluster category $\C_m$, there is an additional exchange triangle  
$$M_m \to B_0 \to M_0 \to$$
which is not a triangle in $\D$.

We have the following direct consequence, with assumptions as in the theorem.

\begin{corollary}
$T/T_i$ has a countably infinite number of non-isomorphic complements $M_i$ for $i \in \mathbb{Z}$.
In particular, there are complements $M_{-1} $ and $M_{m+1}$, such that 
$M_j  \simeq M_{-1}[j+1]$ for $j < -1$, and $M_j = M_{m+1}[j-(m+1)]$ for $j > m+1$. 
\end{corollary}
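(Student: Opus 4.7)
The plan is to build an infinite chain of complements by iterating the silting-exchange construction of Lemma~\ref{silting-exchange} in both directions, and then to prove a stabilization result showing that beyond $M_{m+1}$ and below $M_{-1}$ the chain consists of successive shifts.

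Starting from the family $M_0,\dots,M_m$ of the preceding theorem, I would iteratively apply Lemma~\ref{silting-exchange}(b) beginning at $M_m$ to produce exchange triangles $M_j \to B_{j+1} \to M_{j+1}$ in $\D$, defining complements $M_{m+1}, M_{m+2}, \dots$. Dually, iterating Lemma~\ref{silting-exchange}(a) from $M_0$ yields $M_{-1}, M_{-2}, \dots$ via triangles $M_{j-1} \to B_{j-1} \to M_j$. To pin down $M_{m+1}$ concretely, I would apply the preceding theorem to $\C_{m+1}$: since the complements of $T/T_i$ in $\S_m \subset \S_{m+1}$ are precisely $M_0,\dots,M_m$, the $(m+2)$-element list for $\C_{m+1}$ must be $M_0,\dots,M_m,M_{m+1}$, placing $M_{m+1}$ in $\S_{m+1}$ with $d(M_{m+1}) \in \{m, m+1\}$ by Proposition~\ref{knownfromzz}(d).

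The core step is to show $M_{m+1+k} \simeq M_{m+1}[k]$ for all $k \geq 1$. By induction, it suffices to prove $M_{m+2} \simeq M_{m+1}[1]$, which reduces to the vanishing $\Hom_\D(M_{m+1}, T/T_i) = 0$: this vanishing forces the minimal left $\add T/T_i$-approximation of $M_{m+1}$ to be the zero map, so the exchange triangle collapses to $M_{m+1} \to 0 \to M_{m+1}[1]$. To establish the vanishing, apply $\Hom_\D(-, T/T_i)$ to the exchange triangle $M_m \to B_{m+1} \to M_{m+1}$ and use the rigidity $\Ext^1_\D(M_{m+1}, T/T_i) = 0$ to obtain an exact sequence identifying $\Hom_\D(M_{m+1}, T/T_i)$ with the kernel of $\Hom(B_{m+1}, T/T_i) \to \Hom(M_m, T/T_i)$. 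Surjectivity of this map is the defining property of a left approximation; I expect injectivity to follow from the minimality of the approximation combined with the degree constraints, the cleanest case being $d(M_{m+1}) = m+1$ in which $M_{m+1}$ is a shift of a projective summand of $H$ and the vanishing is immediate because $H$ is hereditary and $T/T_i$ is concentrated in degrees $\leq m$.

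Once the shifting is established, distinctness of the entire doubly-infinite family follows from the strict monotonicity of degrees in the shifted ranges, together with the distinctness among $M_0,\dots,M_{m+1}$ already provided by the theorem. The negative direction is entirely dual, with $M_{-1}$ playing the role of $M_{m+1}$ and Lemma~\ref{silting-exchange}(a) replacing Lemma~\ref{silting-exchange}(b). The main obstacle will be the subtle case $d(M_{m+1}) = m$, where $M_{m+1}$ is a module placed in degree $m$ but not a shift of a projective: here degree considerations alone do not force $\Hom_\D(M_{m+1}, T/T_i) = 0$, and one must carefully exploit the minimality of the approximation together with the rigidity of the silting object $M_{m+1} \oplus T/T_i$ to rule out nonzero morphisms in this case.
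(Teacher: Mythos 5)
Your overall strategy --- iterate the exchange triangles of Lemma \ref{silting-exchange} in both directions, apply the theorem in $\S_{m'}$ for $m'\geq m$ to locate $M_{m+1}$, and reduce the stabilization claim to the vanishing $\Hom_{\D}(M_{m+1},T/T_i)=0$ --- is the right one; the paper offers no proof beyond calling the corollary a direct consequence of the theorem, and this is essentially the intended argument. However, there is a genuine gap exactly at the point you yourself flag, the case $d(M_{m+1})=m$, and the route you propose for the vanishing does not work as stated. Applying $\Hom_{\D}(-,T/T_i)$ to the exchange triangle gives the exact sequence
$$\Hom(M_m,(T/T_i)[-1])\to\Hom(M_{m+1},T/T_i)\to\Hom(B_{m+1},T/T_i)\overset{f^{\ast}}{\longrightarrow}\Hom(M_m,T/T_i),$$
so $\Hom(M_{m+1},T/T_i)$ is not identified with $\ker f^{\ast}$ unless you also kill the leftmost term, and injectivity of $f^{\ast}$ is not a formal consequence of minimality of the approximation. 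As written, the crucial case is left open.

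The missing idea is elementary and makes the long exact sequence unnecessary. Write $M_{m+1}=N[d]$ with $d\in\{m,m+1\}$, as you deduced from Proposition \ref{knownfromzz}(d), and let $X[j]$ with $0\leq j\leq m$ run over the indecomposable summands of $T/T_i$. Then $\Hom_{\D}(N[d],X[j])=\Ext^{j-d}_H(N,X)$ vanishes whenever $j<d$, so the only possibly nonzero contribution occurs when $j=d=m$. In that case $X[m]\in\S_m$ forces $X$ to be projective, while $N$ is non-projective (otherwise $N[m]$ would lie in $\S_m$ and hence be one of $M_0,\dots,M_m$); and over a hereditary algebra $\Hom(N,X)=0$ for $N$ indecomposable non-projective and $X$ projective, since the image of any such map is a projective submodule of $X$ and the resulting surjection from $N$ onto it would split. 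Hence $\Hom_{\D}(M_{m+1},T/T_i)=0$ in all cases, the minimal left $\add T/T_i$-approximation of $M_{m+1}$ is zero, and $M_{m+2}=M_{m+1}[1]$; note that the subsequent steps of your induction need the vanishing $\Hom_{\D}(M_j,T/T_i)=0$ for each $j\geq m+1$ separately, but these follow from the same degree count with strictly negative exponents. Dually, one needs $d(M_{-1})=-1$ (obtained by applying the theorem to $T[1]\in\S_{m+1}$ and shifting back), after which $\Hom_{\D}(T/T_i,M_j)=0$ for $j\leq -1$ is immediate. With these points supplied, your argument is complete.
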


\begin{proof}[Proof of theorem]
Note that by Proposition \ref{siltingtocluster}, 
the complements of $T/T_i$ as an almost complete silting object and as an 
almost complete $m$-cluster tilting object coincide. 
From Proposition \ref{knownfromzz}, 
there are, up to isomorphism, exactly $m+1$ complements 
$M_0, M_1, \dots, M_m$ of $T/T_i$ as an almost complete $m$-cluster tilting object, 
and they can be ordered 
such that for $j= 0, \dots, m-1$, there are exchange triangles $M_j \to B_{j+1} \to M_{j+1} \to$ 
in $\C_m$ and $d(M_{j}) \leq d(M_{j+1})$.

Assume first $m \geq 2$.
All the maps 
$M_{j}  \to B_{j+1}$ and $B_{j+1} \to M_{j+1}$ 
are then $D$-maps by Lemma \ref{dmaps}. Then $M_0 \to B_1$ is by Lemma \ref{forgetting} a minimal 
left $\add \overline{T}$ approximation in $\D$. Hence, by Lemma \ref{silting-exchange} (b) it follows that 
the exchange triangle $M_0 \to B_1 \to M_1$ in $\D$ coincides with the exchange triangle in $\C_m$. 

Iterating this for $M_i \to B_{i+1}$,  
all of (a), (b), and (c) follow.

Consider now the case $m=1$, and the exchange sequence
\begin{equation}\label{mut}
M_0 \to B_{1} \to M_1 \to 
\end{equation}
in $\C_m$.

\noindent (i) If $d(M_0) = d(M_1) = 0$ (up to reordering $M_0$ and $M_1$) 
the sequence (\ref{mut}) is induced by an exact sequence in $\module H$.

\noindent (ii) If $d(M_0) = 0$ and $d(M_1) =1$, it is straightforward to check that the sequence (\ref{mut}) must be induced by
a sequence $M_0 \to B_0 \to P[1] \to $ in the derived category, with both 
$M_0 \to B_{1}$ and $B_{1} \to P[1]$ being $D$-maps, and with $P$ an indecomposable projective.

Hence in both case (i) and (ii) both (a), (b) and (c)
follow from combining Lemmas \ref{silting-exchange} and
\ref{forgetting} as in the above case.
\end{proof}

\subsection{Example}
Let $\E = \E_0$ be a complete exceptional sequence and let $\E_i = \mu_i(\E)$ for $i= 1, \dots , n-1$.
A natural question is: Is it possible to embed these $n$ exceptional sequences simultaneously in $\S_m$
for some value of $m$, such that the following are satisfied 
\begin{itemize}
\item each $X$ occurring in some $\E_i$ is mapped to a fixed $X[j_X]$ 
\item each $\E_i$ is mapped to an $m$-cluster tilting object $U_i$, in such a way that
\item the $m$-cluster tilting object $U_i$ is obtained by mutation of $U_0$ in $\C_m$ 
\end{itemize}

The following example demonstrates that such an embedding is not possible 
in generalß.

Let $Q$ be the quiver
$$
\xymatrix{
& 2 \ar[dl] & \\
1& & 3 \ar[ll] \ar[ul]
}
$$
and $H = kQ$. Let $P_i$ be the indecomposable projective corresponding to vertex $i$.
Then $\E = (P_1,P_2, P_3)$ is a complete exceptional sequence.
We have $\mu_1(\E)  = (P_2, S_2, P_3)$ and $\mu_2(\E) = (P_1, P_3, R )$, where
$S_1$ is the simple corresponding to vertex 1, and $R$ is a regular indecomposable module with composition factors
$S_3$ and $S_1$.

Assume the images of the $P_i$ are $P_1[a], P_2[b], P_3[c]$.
We must have that the image $S_2[d]$ of $S_2$ is obtained by mutation of the cluster tilting object 
$P_1[a] \oplus P_2[b] \oplus P_3[c]$ at $P_1[a]$. 
Hence we must have $\Hom(P_1[a], P_2[b]) \neq 0$  and $\Hom(P_1[a], P_3[c])= 0$, so we must have $a=b$ and $a \neq c$.  
Considering mutation at $P_2[b]$ we get similarly that $\Hom(P_2[b], P_2[c]) \neq 0$, so we must have $b=c$, 
and we have a contradiction. 

\section{Placement of an almost complete
exceptional sequence and its complements}\label{sec4}

As before, let $H$ be a hereditary algebra with $n$ isomorphism classes of simple modules.
In this section we show that for a fixed almost complete exceptional sequence $\E$ in $\module H$, and its set of $n$ complements, 
there is a natural interpretation of $\E$ as an almost complete silting object $\widehat Aß$ in $\S_{n-1}$, in such a way that  
the complements and exchange sequences correspond.

Consider an almost complete exceptional sequence
$\E = (A_0, A_1, \dots, A_{n-2})$.
Let $\C = \{ C_0, C_1, \dots, C_{n-1} \}$ be the complements of $\E$,
such that each
$$\E_i = (A_0, A_1, \dots, C_i, A_i, \dots, A_{n-2})$$
is a complete exceptional sequence.
Recall from section \ref{basics} that for each 
$i = 0, \dots, n-2$, there is a mutation $\mu_i$, replacing  $(C_i, A_i)$ with $(A_i, C_{i+1})$ and that
there is an induced triangle in $\D$ 
\begin{equation}\label{m}
C_{i} \overset{f_i}{\rightarrow} A_i^{r_i}[v] \overset{g_i}{\rightarrow} C_{i+1}[w] \to ,
\end{equation}
with $f_i$ a minimal left $(\operatorname{thick} A_i)$-approximation, with $v,w \in\{0,1\}$. 
Note that $f_i$ might be the zero-map (i.e. $r_i =0$), in which case $C_{i+1} = C_i$ and $w=1$.

We want to define objects $\widehat{C_i}, \widehat{A_i}$ inside the standard domain $\S_{n-1}$, such that the 
following hold.

\begin{itemize}
\item[-] $\widehat{C_i}= C_i[t_i] $ and $\widehat{A_i} = A_i[u_i]$, for integers $t_i, u_i$ 
such that $\widehat{C_i}, \widehat{A_i}$ in $\S_{n-1}$.

\item[-] $\bigoplus_{i=0}^{n-2} \widehat{A_i}$ is an almost complete silting object with complements $\widehat{C_i}$.

\item[-] There are exchange triangles $\widehat{C_i} \to 
\widehat{A_i^{r_i}} \to \widehat{C_{i+1}} \to$ given by shifting the 
mutation triangles (\ref{m}).
\end{itemize}

\begin{theorem}\label{main}
\sloppy Given an almost complete exceptional sequence
$\E = (A_0, A_1, \dots, A_{n-2})$, 
there exists a placement of the objects $A_i$, $C_i$ in $\S_m$ with the
above properties.  
\end{theorem}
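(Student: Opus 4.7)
The plan is to define the shifts $t_i$ and $u_j$ iteratively from the mutation triangles (\ref{m}), and then verify the three bullet points of the theorem in turn.

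Construction. Set $t_0 = 0$, and for each $i$ write the mutation triangle (\ref{m}) as $C_i \to A_i^{r_i}[v_i] \to C_{i+1}[w_i] \to$ with $v_i, w_i \in \{0,1\}$ read off from which of the four cases (vanishing $\Hom$ and $\Ext^1$, epimorphism approximation, monomorphism approximation, universal extension) arises. Recursively set $u_i := t_i + v_i$ and $t_{i+1} := t_i + w_i$, and define $\widehat{A_j} := A_j[u_j]$ and $\widehat{C_i} := C_i[t_i]$. By construction, shifting the triangle (\ref{m}) by $t_i$ gives exactly $\widehat{C_i} \to \widehat{A_i}^{r_i} \to \widehat{C_{i+1}} \to$ in $\D$, supplying the candidate exchange triangles.

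Monotonicity and silting. A case-by-case inspection of the four mutation types shows that $v_i \leq w_i$ in every case, so $t_i \leq u_i \leq t_{i+1}$. Iterating yields $0 = t_0 \leq u_0 \leq t_1 \leq u_1 \leq \cdots \leq u_{n-2} \leq t_{n-1}$, and since $t_{n-1} = \sum_k w_k \leq n-1$ every shift lies in $[0, n-1]$. For each $i$ the complete exceptional sequence $\E_i$ has weakly increasing degrees in its natural order, so Lemma \ref{silting2}(a)(ii) yields that $\widehat A \oplus \widehat{C_i}$ is a silting object in $\D$. Hence $\widehat A$ is almost complete silting and each $\widehat{C_i}$ is one of its complements.

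Identifying the exchange triangles. Applying the main theorem of Section 3 to the almost complete silting $\widehat A$ in $\S_{n-1}$, there are exactly $n$ complements to $\widehat A$ in $\S_{n-1}$, ordered by degree and joined by a unique chain of exchange triangles. The $n$ objects $\widehat{C_i}$ constructed above are already degree-ordered by the monotonicity, and the shifted mutation triangles link them in the prescribed way; a direct degree comparison combined with the uniqueness of exchange triangles from Lemma \ref{silting-exchange} then identifies them with the canonical exchange triangles.

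Main obstacle. The delicate point is verifying that the placement genuinely lies in $\S_{n-1}$: any summand attaining the top shift $n-1$ must be a projective $H$-module. The problematic case is $t_{n-1} = n-1$ (i.e.\ every $w_k = 1$), where the offending module sits at the end of a maximal directed chain of length $n-1$ in the Hom-Ext quiver $\Q_{\E_{n-1}}$. Combining the acyclicity and connectedness of $\Q_{\E_{n-1}}$ from Section \ref{sec1} with the structural rules in Lemma \ref{elementary} and the case analysis of the mutations producing $w_k = 1$ is what is needed to force the offending module to be projective. This use of the Hom-Ext quiver is exactly why its acyclicity was flagged as a main ingredient of the proof.
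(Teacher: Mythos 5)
There is a genuine gap in your ``Monotonicity and silting'' step, and it is precisely the difficulty the paper's machinery is built to overcome. You claim that because each $\E_i$ has weakly increasing degrees in its natural order, Lemma \ref{silting2} yields that $\widehat{A}\oplus\widehat{C_i}$ is silting. But Lemma \ref{silting} requires two things: $\Hom(T_i,T_j)=0=\Ext^1(T_i,T_j)$ for $i>j$ (which the exceptional-sequence order does give you, since higher-degree terms sit to the right), \emph{and} $\Ext^1(T_i,T_i)=0$, i.e.\ the summands landing in a common degree must form a rigid module. The exceptional sequence axioms do not forbid a ``forward'' extension $\Ext^1(E_a,E_b)\neq 0$ for $a<b$, so two terms that your recursion places in the same degree can carry such an extension, destroying the partial silting property. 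Your shifted triangles only control the interaction of $C_i$ with $A_{i-1}$ and $A_i$; nothing in the construction separates $A_j$ from $A_k$ in degree when $\Ext^1(A_j,A_k)\neq 0$ with $j<k$. This is exactly what the paper's placement rules are for --- conditions (ii) and (iii) of P3 push $A_{n-2}$ strictly above any earlier $A_j$ admitting $\Ext^1(A_j,A_{n-2})\neq 0$ --- and the entire proof of the statement $\mathcal V_n$ there consists of showing, via directed paths in the Hom-Ext quiver and Theorem \ref{acyclic}, that whenever degrees are forced to coincide these forward extensions vanish. If your naive recursion from $t_0=0$ sufficed, none of that would be needed.

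A second, acknowledged but unresolved, gap is your ``main obstacle'': membership in $\S_{n-1}$ requires any summand placed in degree $n-1$ to be projective, and you only assert that acyclicity and connectedness of the Hom-Ext quiver ``is what is needed'' to force this. The paper proves it quite differently: the induction runs through the perpendicular category $A_{n-2}^{\perp}\simeq\module H'$, placing $A_0,\dots,A_{n-3}$ and $C_0,\dots,C_{n-2}$ in the fundamental domain for $H'$ first, and then uses the identification of the Ext-projectives of $A_{n-2}^{\perp}$ with the Bongartz complement of $A_{n-2}$ (Lemma \ref{bonga}) to show that if a top-degree placement is forced then $A_{n-2}$ must be projective. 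Your single global recursion has no analogue of this step, and connectedness of the quiver plays no role in the paper's argument here.
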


\begin{proof}
The proof is by induction on $n$. In case $n=1$, the almost complete exceptional sequence $\E$ is empty, and has a unique complement 
$C_0 =  H$. The standard domain $\S_0$ also consists of only the object $H$, and a placement trivially exists.

Assume now $n >1$, and
consider the category $$A_{n-2}^{\perp} = \{U \in \module H \mid \Hom(A_{n-2},U) = \Ext^1(A_{n-2},U) = 0 \}.$$ 
By \cite{cb,r} there is an hereditary algebra $H'$ such that $\module H' \simeq A_{n-2}^{\perp} $, and
$H'$ has $n-1$ isomorphism classes of simple modules.

Since each $A_i$ for $0 \leq i \leq n-3$ and each $C_i$ for $0 \leq i \leq n-2$ is in $A_{n-2}^{\perp} $, we have
by induction a placement for these in the fundamental domain for $H'$, which as a full additive subcategory of $\D$ is generated by
$A_{n-2}^{\perp} [j]$ for $0 \leq j \leq n-3$, and $X[n-2]$, where $X$ is the direct sum of all indecomposable $\Ext$-projectives
of $A_{n-2}^{\perp}$.   
 
We now define $\widehat{A_{n-2}}$ and $\widehat{C_{n-1}}$ according to the following rules.

\begin{itemize}
\item[(P1)] If $\Hom(C_{n-2}, A_{n-2}) \neq 0$, then let $d(\widehat{A_{n-2}}) = d(\widehat{C_{n-2}})$. 
\item[(P2)] If $\Ext^1(C_{n-2}, A_{n-2} ) \neq 0$, then let $d(\widehat{A_{n-2}}) = d(\widehat{C_{n-2}})+1$.
\item[(P3)] Otherwise let $d(\widehat{A_{n-2}})  \geq n-3$ and minimal such that the following is satisfied
\begin{itemize}
\item[(i)]  $d(\widehat{A_{n-2}})  \geq d(\widehat{C_{n-2}})$,
\item[(ii)]  $d(\widehat{A_{n-2}}) > d(\widehat{A_{n-3}})$ if $\Ext^1(A_{n-3}, A_{n-2} ) \neq 0$, 
\item[(iii)] $d(\widehat{A_{n-2}}) > d(\widehat{A_{n-4}})$ if $\Ext^1(A_{n-4}, A_{n-2} ) \neq 0$.  
\end{itemize}
\end{itemize}
In case (P1) or (P2), $\widehat{C_{n-1}}$ is defined such that  
we have a triangle $$\widehat{C_{n-2}} \to
\widehat{A_{n-2}^{r_{n-2}}} \to \widehat{C_{n-1}} \to.$$
In case (P3), we let $\widehat{C_{n-1}} = \widehat{C_{n-2}}[1]$.  

We claim that with this 
definition of $\widehat{A_{n-2}}$ and $\widehat{C_{n-1}}$ we have that the following hold.

\begin{itemize}
\item[$\mathcal{U}_n$:] $\widehat{A_{n-2}}$ is in $\S_{n-1}$.
\item[$\mathcal V_n$:] $\widehat{A} = \oplus_{i= 0}^{n-2} \widehat{A_{i}}$ is an almost complete silting object.
\item[$\mathcal W_n$:] $\widehat{C_{i}}$ for $i = 0,  \dots , n-1$ are the complements of $\widehat{A}$ inside $\S_{n-1}$.
\item[$\mathcal X_n$:] If $d(\widehat{A_{n-2}})   = d(\widehat{C_{n-1}})$, then there is a directed path from $A_{n-2}$ to $C_{n-1}$ 
in the Hom-Ext quiver $\Q$ associated to the exceptional sequence 
$$(A_0, \dots , A_{n-2}, C_{n-1})$$ with the property that 
the last arrow on this path corresponds to a morphism to $C_{n-1}$.
\item[$\mathcal Y_n$:] $d(\widehat{C_{n-1}}) \geq d(\widehat{A_{n-2}})  \geq d(\widehat{C_{n-2}})$
\end{itemize}

It is clear that all statements hold for $n=1$. It is also clear that
the claim of the theorem follows from $\mathcal{U}_n$, $\mathcal V_n$, 
$\mathcal W_n$.
We need also to include $\mathcal X_n$ and  $\mathcal Y_n$ as part of our induction argument.

We first need a lemma. In \cite{bong} it was shown that for any exceptional non-projective
module $Y$, there is a (Bongartz-)complement $W$, with the properties that
\begin{itemize}
\item[-] $Y \oplus W$ is tilting,
\item[-] $\Ext^1(Y,U) = 0$ implies $\Ext^1(W,U) = 0$ for any $U$ in $\module H$ and  
\item[-] $\Hom(Y,W) =0$.
\end{itemize}

\begin{lemma}\label{bonga}
Assume $A_{n-2}$  is not projective, then the direct sum $X$ of one copy of each
of the indecomposable Ext-projectives in
$A_{n-2}^{\perp}$, up to isomorphism, equals the Bongartz-complement of $A_{n-2}$.
\end{lemma}

\begin{proof}
Let $W$ be the Bongartz complement of $A_{n-2}$ in $\module H$. Then it follows directly
from the properties of the Bongartz complement that $W$ is in  $A_{n-2}^{\perp}$, and is 
Ext-projective in this subcategory. Since it is known that  $A_{n-2}^{\perp}$ is equivalent to a module category
with $n-1$ isomorphism classes of simples, the statement follows.
 \end{proof}

By induction we can assume $\mathcal U_m$--$\mathcal Y_m$ hold for $m<n$. We need to prove that they hold also for $n$.  \\
\\
\noindent Proof of  $\mathcal{U}_n$: \\ 
It is clear that we only need to consider the case where  $A_{n-2}$ is placed according to P2 or P3.
Assume first  $\Ext^1(C_{n-2}, A_{n-2} ) \neq 0$, so that  $d(\widehat{A_{n-2}}) = d(\widehat{C_{n-2}})+1$ according to P2.
If $d(\widehat{C_{n-2}}) \leq n-3$, then $d(\widehat{A_{n-2}}) \leq n-2$. 
So assume  
$d(\widehat{C_{n-2}}) = n-2$ and $d(\widehat{A_{n-2}}) = n-1$. 
If $A_{n-2}$ is non-projective, then $C_{n-2}$ must be a summand in $X$, and hence Ext-projective in $A_{n-2}^{\perp}$. Hence,
by Lemma \ref{bonga}  we have that $\Ext^1(C_{n-2},A_{n-2}) = 0$, contradicting our assumption.
If $A_{n-2}$ is projective, then 
 $\widehat{A_{n-2}} $ is in $\S_n$, and we are done.

Assume $\widehat{A_{n-2}}$  is placed according to case P3. We need to show that if $d(\widehat{A_{n-2}}) = n-1$,
then $A_{n-2}$ is projective. 
We have that $d(\widehat{C_{n-2}}) \leq n-2$, by $\mathcal W_{n-1}$. Therefore, if
$d(\widehat{A_{n-2}}) = n-1$, this must be forced by (ii) or (iii) in P3. 
However $d(\widehat{A_{n-4}}) \leq n-3$ by $\mathcal U_{n-2}$, so we need only consider option (ii), i.e. $\Ext^1(A_{n-3}, A_{n-2} ) \neq 0$.
We must have $d(\widehat{A_{n-3}}) = n-2$, hence $A_{n-3}$ is a summand in $X$, the sum of the Ext-projectives in $A_{n-2}^{\perp}$.
If $A_{n-2}$ was non-projective, then $\Ext^1(A_{n-3}, A_{n-2}) = 0$ by Lemma \ref{bonga}, and we have a contradiction.
Hence, we have that  $A_{n-2}$ is projective, and therefore
 $\widehat{A_{n-2}} $ is in $\S_{n-1}$, and we are done. \\
 \\
\noindent Proof of $\mathcal V_n$: \\
By induction $\widehat{A'} = \bigoplus_{i= 0}^{n-3}  \widehat{A_{i}}$ is a partial silting object in $\D(H')$, lying inside the
fundamental domain for $H'$. Clearly, $\widehat{A'}$ is also a partial silting object in $\D(H)$.

\sloppy We need to show that if $d(\widehat{A_{n-2}}) =d(\widehat{A_{n-j}}) $ for some $j \geq 3$,
then $\Ext^1(A_{n-j}, A_{n-2}) = 0$. Note that for $j \geq 5$, we have that $d(\widehat{A_{n-j}}) \leq n-4$ by $\mathcal U_{n-3}$, while
$d(\widehat{A_{n-2}})  \geq n-3$, so we only need to consider the cases $j=3,4$.

We first consider the case $j=3$.  Assume $d(\widehat{A_{n-2}}) =d(\widehat{A_{n-3}}) $. By $\mathcal Y_{n-1}$, we have 
$d(\widehat{C_{n-2}}) \geq d(\widehat{A_{n-3}})$, and by considering the different cases of our placement algorithm 
$d(\widehat{A_{n-2}}) \geq d(\widehat{C_{n-2}})$. Hence we must have 
$d(\widehat{A_{n-2}}) = d(\widehat{C_{n-2}}) = d(\widehat{A_{n-3}})$. 

Using $\mathcal X_{n-1}$, we find, in the Hom-Ext quiver $\Q$ associated to the exceptional sequence $(A_0, \dots, A_{n-3}, C_{n-2})$ in $\module H'$,
a directed path from $A_{n-3}$ to $C_{n-2}$. 
Assume first we are in case P1, so that we have a non-zero map
from  $C_{n-2}$ to $A_{n-2}$. If $\Ext^1(A_{n-3}, A_{n-2}) \neq 0$, this would give us an arrow from $A_{n-2}$ to $A_{n-3}$ 
in the Hom-Ext quiver $\Q$, and hence an oriented cycle. By Theorem \ref{acyclic}, such cycles do not exist, so we must have 
$\Ext^1(A_{n-3}, A_{n-2}) = 0$, which is what we wanted to show.  
Now assume we are in case P3. Then, since $d(\widehat{A_{n-2}}) = d(\widehat{A_{n-3}})$, we must 
have that $\Ext^1(A_{n-3}, A_{n-2}) = 0$, according to (ii). 
It is also clear we cannot be in case P2, since $d(\widehat{A_{n-2}}) = d(\widehat{C_{n-2}})$.

We now consider the case $j=4$. Assume $d(\widehat{A_{n-2}}) =d(\widehat{A_{n-4}})$.
Using $\mathcal Y_{n-1}$ and 
$\mathcal Y_{n-2}$, this implies that  
$$d(\widehat{A_{n-2}}) = d(\widehat{C_{n-2}}) = d(\widehat{A_{n-3}}) = d(\widehat{C_{n-3}}) = d(\widehat{A_{n-4}}).$$

We can apply $\mathcal X_{n-2}$ to find a directed path  from $A_{n-4}$ to $C_{n-3}$
in the Hom-Ext quiver $\Q$ of $$(A_0,\dots,A_{n-4},C_{n-3}).$$
Since $d(\widehat{C_{n-2}}) =  d(\widehat{C_{n-3}})$ , there must be an exchange triangle
$$\widehat{C_{n-3}}  \to \widehat{A_{n-3}^{r_{n-3}}}  \to \widehat{C_{n-2}} \to $$ 
which is induced by an exact sequence in $\module H$. 
Since $C_{n-3}$ embeds into $A_{n-3}^{r_{n-3}}$, we can compose the 
final morphism in the path from $A_{n-4}$ to $C_{n-3}$ with this inclusion, 
obtaining a path from $A_{n-4}$ to $A_{n-3}$, onto which the arrow from
$A_{n-3}$ to $C_{n-2}$ can be added, giving us a path from $A_{n-4}$ to
$C_{n-2}$.

Assume we are in case P1.
We then have
a map from $C_{n-2}$ to $A_{n-2}$. Hence, if $\Ext^1(A_{n-4}, A_{n-2}) \neq 0$, 
this would give us an arrow from $A_{n-2}$ to $A_{n-4}$, and hence an oriented cycle
in the Hom-Ext quiver $\Q$. Therefore we must have $\Ext^1(A_{n-4}, A_{n-2}) = 0$.
Assume we are in case P3. Then, since 
$d(\widehat{A_{n-2}}) = d(\widehat{A_{n-4}})$, we have that $\Ext^1(A_{n-4}, A_{n-2}) = 0$, according to (iii). 
It is also clear we cannot be in case P2, since $d(\widehat{A_{n-2}}) = d(\widehat{C_{n-2}})$.
Hence we are done with $\mathcal V_n$. \\
\\
\noindent Proof of $\mathcal W_n$: \\
We first claim that $\widehat{C_{n-2}}$ is a complement to $\widehat{A}$. By induction, we only need to show that it is compatible
with $\widehat{A_{n-2}}$. But this follows directly from P2 in our placement algorithm.

Next, we claim that all $\widehat{C_{i}}$ for $i= 0, \dots, n-3$ are also complements.
For this, consider the triangles 
$$\widehat{C_{i-1}} \to \widehat{{A_{i}}^{r_i}} \to \widehat{C_{i}} \to$$
for $i=1, \dots, n-2$.
The maps $\widehat{{A_{i}}^{r_i}} \to \widehat{C_{i}}$ are by assumption minimal right $\widehat{A'}$-approximations in the fundamental domain for
$A_{n-2}^{\perp}$, which is a full subcategory of $\D$. Hence they are also minimal right $\widehat{A'}$-approximations in $\D$. We have that
$\Hom(A_{n-2}, C_{j}) = \Ext^1(A_{n-2}, C_{j}) = 0$ for $j \leq n-2$ by our assumptions. Hence 
the maps $\widehat{A_{i}} \to \widehat{C_{i}}$ 
are also right $\widehat{A}$-approximations. Using Lemma \ref{silting-exchange}, we obtain that all $\widehat{C_{i}}$ for $i \leq n-2$ are complements.

Finally, we claim that $\widehat{C_{n-1}}$ is also a complement.
This follows directly from our placement algorithm, since 
the left $\widehat{A_{n-2}}$-approximation of $\widehat{C_{n-2}}$ is indeed a left $\widehat{A}$-approximation.
This follows since $\widehat{C_{n-2}}$ does not have any non-zero maps to $\widehat{A'}$.\\
\\
\noindent Proof of $\mathcal X_n$: \\
Assume first $\Hom(C_{n-2}, A_{n-2}[j]) \neq 0$ for $j=0$ or $j=1$.
Then $\widehat{A_{n-2}}, \widehat{C_{n-1}}$ are placed using P1 or P2,
so if $d(\widehat{A_{n-2}}) = d(\widehat{C_{n-1}})$, then clearly there
is a map $A_{n-2} \to C_{n-1}$. 

So we can assume $\Hom(C_{n-2}, A_{n-2}[j]) = 0$ for $j=0,1$, 
and hence $C_{n-1} = C_{n-2}$.
Using $\mathcal W_{n-1}$ and Proposition \ref{knownfromzz}(d), we have that $d(\widehat{C_{n-2}}) \geq n-3$,
and hence $d(\widehat{C_{n-1}}) \geq n-2$. 
Hence 
$d(\widehat{A_{n-2}}) \geq n-2$ by assumption. Since 
$d(\widehat{A_{n-2}}) \neq n-3$, one of the following must 
hold
\begin{itemize}
\item[(i')] $d(\widehat{C_{n-2}}) =d(\widehat {A_{n-2}})= n-2$
\item[(ii')] $\Ext^1(A_{n-3},A_{n-2}) \neq 0$ and (ii) in P3 applies
\item[(iii')] $\Ext^1(A_{n-4},A_{n-2}) \neq 0$ and (iii) in P3 applies
\end{itemize}

Assume first (i'). In this case we have
$d(\widehat{A_{n-2}}) = d(\widehat{C_{n-2}}) = n-2$ and
$d(\widehat{C_{n-2}}) < d(\widehat{C_{n-1}})$, hence we cannot have
$d(\widehat{A_{n-2}}) =   d(\widehat{C_{n-1}})$, so there is nothing to show.

Assume now (ii').
By assumption 
$\widehat{C_{n-1}} = \widehat{C_{n-2}}[1]$, so we have that
$d(\widehat{C_{n-2}}) = d(\widehat{A_{n-3}})$. Hence, we can apply
$\mathcal X_{n-1}$, to get a directed path from $A_{n-3}$ to $C_{n-2}$
in the Hom-Ext quiver of the exceptional sequence 
$$(A_0, \dots, A_{n-3}, C_{n-2})$$
and such that this path ends with an arrow corresponding to a homomorphism.
Since $C_{n-2} = C_{n-1}$, this gives us a 
a directed path from $A_{n-3}$ to $C_{n-1}$
in the Hom-Ext quiver of the exceptional sequence 
$$(A_0, \dots, A_{n-3}, A_{n-2}, C_{n-1}).$$
By assumption,  $\Ext^1(A_{n-3},A_{n-2}) \neq 0$, so we can add 
$A_{n-2}\rightarrow A_{n-3}$ to the front of the directed path, producing
the directed path we need.
 
Assume now (iii').
By $\U_{n-2}$, 
we have $d(\widehat{A_{n-4}}) \leq n-3$.
Then, since (iii) forced $d(\widehat{A_{n-2}}) > n-3$, we must have
$d(\widehat{A_{n-4}}) = n-3$, and we have $d(\widehat{A_{n-2}})= n-2$.
By assumption, then $d(\widehat{C_{n-1}})= n-2$, and
since $C_{n-2} = C_{n-1}$, we have  $d(\widehat{C_{n-2}})= n-3$.

By $\mathcal Y_{n-1}$, we then have that also $d(\widehat{C_{n-3}})= n-3$.
Since  $d(\widehat{C_{n-2}})= d(\widehat{C_{n-3}})$,
we must have a short exact sequence
$$0 \to C_{n-3} \to A_{n-3}^{r_{n-3}}  \to C_{n-2} \to 0$$

By $\mathcal X_{n-2}$
we have a path $\rho$ from $A_{n-4}$ to $C_{n-3}$
in the Hom-Ext quiver of the exceptional sequence
$$(A_0, \dots, A_{n-4}, C_{n-3})$$ ending in an arrow, say
$A_j\rightarrow C_{n-3}$ for some $j$, and 
where this arrow is either an m-arrow or an
e-arrow (i.e., it represents a morphism from $A_j$).  
We can prepend to $\rho$ the x-arrow from $A_{n-2}$ to $A_{n-4}$,
resulting in a directed path from $A_{n-2}$ to $C_{n-3}$.  

We know that $C_{n-3}$ injects into $A_{n-3}^{r_{n-3}}$, so we can
compose the map from $A_j$ to $C_{n-3}$ with this inclusion, to obtain
an arrow in the Hom-Ext quiver from $A_j$ to $A_{n-3}$.  There is also an
arrow from $A_{n-3}$ to $C_{n-2}$.  Thus we can remove the last step of
$\rho$ and replace it by $A_j\rightarrow A_{n-3}\rightarrow C_{n-2}=
C_{n-1}$, obtaining a directed path as desired.  
\\
\\
\noindent Proof of $\mathcal Y_n$: \\
The inequality 
$d(\widehat{A_{n-2}}) \geq d(\widehat{C_{n-2}})$ follows directly from (P1), (P2)
and part(i) of (P3).

The inequality 
$d(\widehat{C_{n-1}}) \geq d(\widehat{A_{n-2}})$
is clearly satisfied if we are in case (P1) or (P2).
So assume we are in case (P3), and hence $C_{n-2} = C_{n-1}$,
and $d(\widehat{C_{n-2}}) = d(\widehat{C_{n-1}}) -1$.

Proposition \ref{knownfromzz} and $\mathcal W_{n-1}$ together imply that
$d(\widehat{C_{n-2}})\geq n-3$. 
Hence, we have 
$d(\widehat{A_{n-2}})  \geq n-3$.
Assume that the desired inequality is not satisfied, so we have
$d(\widehat{C_{n-1}}) < d(\widehat{A_{n-2}})$. 
Then we must have $d(\widehat{A_{n-3}}) = d(\widehat{C_{n-1}}) > 
 d(\widehat{C_{n-2}})$, which contradicts $\mathcal Y_{n-1}$.
This finishes the proof for $\mathcal Y_n$.

\end{proof}


\begin{thebibliography}{99}


\bibitem[AST]{ast}
I. Assem,  M.J. Souto Salorio and S. Trepode,
\emph{Ext-projectives in suspended subcategories},
Journal of pure and applied algebra \textbf{212} (2008), 423--434

\bibitem[B]{bong}
K. Bongartz,
\emph{Tilted algebras}, 
Representations of algebras (Puebla, 1980), pp. 26--38, Lecture Notes in Math., 903, Springer, Berlin-New York, 1981

\bibitem[CB]{cb}
W. Crawley-Boevey,
\emph{Exceptional sequences of representations of quivers},
Representations of algebras (Ottawa, ON, 1992), 117--124, CMS Conf. Proc., 14, Amer. Math. Soc., Providence, RI, 1993

\bibitem[BMRRT]{bmrrt}
A. B. Buan, R. J. Marsh, M. Reineke, I. Reiten and G. Todorov,
\emph{Tilting theory and cluster combinatorics},
Advances in Mathematics \textbf{204} (2) (2006), 572--618

\bibitem[H]{happelbook}
D. Happel,
\emph{Triangulated categories in the representation theory of finite-dimensional algebras}, 
London Mathematical Society Lecture Note Series, 119. Cambridge University Press, Cambridge, 1988. x+208 pp. ISBN: 0-521-33922-7

\bibitem[HR]{hr}
D. Happel and C. M. Ringel,
\emph{Tilted algebras}, 
Trans. Amer. Math. Soc. 274 (1982), no. 2, 399--443

\bibitem[K]{k}
B. Keller,
\emph{On triangulated orbit categories}, 
Doc. Math. 10 (2005), 551--581 

\bibitem[KV]{kv}
B. Keller and D. Vossieck,
\emph{Aisles in derived categories}, 
Bull. Soc. Math. Belg. Ser. A 40 (1988), no. 2, 239--253

\bibitem[RS]{rs}
C. Riedtmann and A. Schofield,
\emph{On open orbits and their complements},
J. Algebra  130  (1990),  no. 2, 388--411

\bibitem[R]{r}
C. M. Ringel,
\emph{The braid group action on the set of exceptional sequences of a hereditary Artin algebra}, 
Abelian group theory and related topics (Oberwolfach, 1993), 339--352, Contemp. Math., 171, Amer. Math. Soc., Providence, RI, 1994

\bibitem[Ru]{rudakov}
A. Rudakov,
\emph{Exceptional collections, mutations and helices. Helices and vector bundles}, 
1--6, London Math. Soc. Lecture Note Ser., 148, Cambridge Univ. Press, Cambridge, 1990


\bibitem[T]{t}
H. Thomas,
\emph{Defining an $m$-cluster category}, J. Algebra 318 (2007), no. 1, 37--46

\bibitem[W]{w}
A. Wraalsen,
\emph{Rigid objects in higher cluster categories},
J. Algebra 321 (2009), no. 2, 532--547

\bibitem[Z]{z}
B. Zhu,
\emph{Generalized cluster complexes via quiver representations},
J. Algebraic Combin. 27 (2008), no. 1, 35--54

\bibitem[ZZ]{zz}
Y. Zhou and B. Zhu, 
\emph{Cluster combinatorics of $d$-cluster categories}, J. Algebra 321 (2009), no. 10, 2898--2915


\end{thebibliography}
\end{document}